\journal{the arXiv}
\newtheorem{theorem}{Theorem}[section]
\newtheorem{lemma}[theorem]{Lemma}
\theoremstyle{definition}
\theoremstyle{remark}
\numberwithin{equation}{section}
\numberwithin{equation}{section}
\begin{document}

\begin{frontmatter}

\title{A new version of the second main theorem for meromorphic mappings intersecting hyperplanes in several complex variables\tnoteref{mytitlenote}}
\tnotetext[mytitlenote]{The first author was supported in part by NSFC(no.11461042),  CPSF(no.2014M551865), CSC(no.201308360070),  PSF of Jiangxi(no.2013KY10). The second author was supported in part by the Academy of Finland grant(\#286877) and (\#268009).}



\author{Tingbin Cao}
\address{Department of Mathematics, Nanchang University, Nanchang, Jiangxi 330031, P. R. China}
\ead{tbcao@ncu.edu.cn}
\author{Risto Korhonen}
\address{Department of Physics and Mathematics, University of Eastern Finland, P.O. Box 111,
FI-80101 Joensuu, Finland}
\ead{risto.korhonen@uef.fi}

\begin{abstract}
Let $c\in \mathbb{C}^{m},$ $f:\mathbb{C}^{m}\rightarrow\mathbb{P}^{n}(\mathbb{C})$ be a linearly nondegenerate meromorphic mapping over the field $\mathcal{P}_{c}$ of $c$-periodic meromorphic functions in $\mathbb{C}^{m}$, and let $H_{j}$ $(1\leq j\leq q)$ be $q(>2N-n+1)$ hyperplanes in $N$-subgeneral position of $\mathbb{P}^{n}(\mathbb{C}).$ We prove a new version of the second main theorem for meromorphic mappings of hyperorder strictly less than one without truncated multiplicity by considering the Casorati determinant of $f$ instead of its Wronskian determinant. As its applications,  we obtain a defect relation, a uniqueness theorem and a difference analogue of generalized Picard theorem.
\end{abstract}

\begin{keyword}
Meromorphic mapping \sep Nevanlinna theory \sep Difference operator \sep Casorati determinant

\MSC[2010] Primary 32H30 \sep Secondary 30D35
\end{keyword}

\end{frontmatter}

\allowdisplaybreaks

\section{Introduction}

The Picard's theorem says that all holomorphic mappings $f: \mathbb{C}^{1}\rightarrow \mathbb{P}^{1}(\mathbb{C})\setminus\{a, b, c\}$ are constants. Since Nevanlinna \cite{nevanlinna:25} established the second main theorem for meromorphic functions in the complex plane in 1925 and Ahlfors did it for meromorphic curves in 1941, many forms of the second main theorem for holomorphic maps, as well as meromorphic maps, on various contexts were found. They are powerful generalizations of the Picard's theorem, and are also applied to defect relations and uniqueness problems. By Weyl-Ahlfors' method Chen \cite{chen:90} proved a second main theorem as follows. The case of $m=1$ is proved by H. Cartan \cite{cartan:33} when hyperplanes $H_{j}$ $(1\leq j \leq q)$ are in general position. \par

\begin{theorem}[\cite{chen:90,cartan:33}]\label{T-1.0} Let $f:\mathbb{C}^{m}\rightarrow\mathbb{P}^{n}(\mathbb{C})$ be a linearly nondegenerate meromorphic mapping over $\mathbb{C}^{1},$ and let $H_{j}$ $(1\leq j\leq q)$ be $q(>2N-n+1)$ hyperplanes in $N$-subgeneral position in $\mathbb{P}^{n}(\mathbb{C}).$ Then we have
\begin{eqnarray*}
 (q-2N+n-1)T_{f}(r)\leq\sum_{j=1}^{q}N(r, \nu_{(f, H_{j})}^{0})-\frac{N+1}{n+1}N(r, \nu_{W(f)}^{0})+o(T_{f}(r))
\end{eqnarray*} for all $r>0$ outside of a possible exceptional set $E\subset[1, +\infty)$ of finite Lebesgue measure, where $W(f)$ is the Wronskian determinant of $f.$ \end{theorem}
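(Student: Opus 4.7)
The plan is to follow the Cartan--Nochka method in several complex variables, along the lines of Chen's proof via the Weyl--Ahlfors approach.

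First, fix a reduced representation $f = (f_{0} : \cdots : f_{n})$ with $f_{0}, \ldots, f_{n}$ holomorphic on $\mC^{m}$ having common zero set of codimension at least two, and for each hyperplane $H_{j}$, with defining linear form $L_{j}(w) = \sum_{k} a_{jk} w_{k}$, put $F_{j} := L_{j}(f)$. Linear nondegeneracy over $\mC$ allows one to select $n+1$ constant-coefficient differential operators $D_{0}, \ldots, D_{n}$ built from $\partial/\partial z_{1}, \ldots, \partial/\partial z_{m}$ so that the generalized Wronskian $W(f) := \det(D_{i} f_{k})_{0 \le i, k \le n}$ is not identically zero; this is the $W(f)$ appearing in the statement. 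A direct computation yields the key algebraic identity $W(F_{j_{0}}, \ldots, F_{j_{n}}) = \det(a_{j_{i} k}) \cdot W(f)$ for any $(n+1)$-subset $\{j_{0}, \ldots, j_{n}\}$ of hyperplanes in general position, with a nonzero constant precisely because the $L_{j_{i}}$'s are linearly independent.

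Second, apply Nochka's combinatorial lemma to produce weights $\omega_{1}, \ldots, \omega_{q} \in (0, 1]$ and a Nochka constant $\tilde{\omega}$ satisfying $\sum_{j} \omega_{j} = \tilde{\omega}(q - 2N + n - 1) + n + 1$ and the subset property $\sum_{j \in R} \omega_{j} \le \dim \mathrm{span}\{L_{j} : j \in R\}$ whenever $|R| \le N+1$. From this one extracts the pointwise Nochka inequality
\[
\prod_{j=1}^{q} \left(\frac{\|f(z)\|}{|F_{j}(z)|}\right)^{\omega_{j}} \le C \prod_{j \in S(z)} \frac{\|f(z)\|}{|F_{j}(z)|},
\]
valid for every $z \in \mC^{m}$ outside the indeterminacy locus, where $S(z)$ is a general-position $(n+1)$-subset chosen among the indices for which $|F_{j}(z)|/\|f(z)\|$ is smallest, and $C$ depends only on the $L_{j}$'s.

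Third, insert the Wronskian identity into the right-hand side to rewrite $\prod_{j \in S(z)} |F_{j}|^{-1}$ as $|c_{S(z)}|^{-1} \cdot |W(f)|^{-1} \cdot |W(F_{j_{0}}, \ldots, F_{j_{n}})|/\prod_{i} |F_{j_{i}}|$, take the logarithm, and integrate against the Fubini--Study type measure over the sphere of radius $r$ in $\mC^{m}$. Jensen's formula applied to $W(f)$ converts the $\log|W(f)|$ contribution into $N(r, \nu^{0}_{W(f)})$ with the sign required by the statement, while the first main theorem $T_{f}(r) = m_{f}(r, H_{j}) + N(r, \nu^{0}_{(f, H_{j})}) + O(1)$ handles the $\log \|f\|/|F_{j}|$ terms. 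The surviving ratio $|W(F_{j_{0}}, \ldots, F_{j_{n}})|/\prod_{i} |F_{j_{i}}|$ is a polynomial combination of logarithmic derivatives $D_{i} F_{j}/F_{j}$, hence by the several-variables logarithmic derivative lemma (Vitter, Biancofiore--Stoll) its proximity function is $o(T_{f}(r))$ outside an exceptional set of finite Lebesgue measure. Combining all of this via the Nochka identity for $\sum_{j} \omega_{j}$ and using the bounds on $\tilde{\omega}$ to calibrate the coefficient of $N(r, \nu^{0}_{W(f)})$ gives the claimed inequality.

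The main technical obstacle is the several-variables logarithmic derivative lemma with error $o(T_{f}(r))$, together with the choice of the operators $D_{i}$ that keep $W(f)$ nontrivial and make the LDL applicable to the precise combination that arises. Once these analytic ingredients are in hand, Nochka's lemma and the Wronskian identity reduce the remainder of the argument to careful bookkeeping of the coefficients produced by $\tilde{\omega}$.
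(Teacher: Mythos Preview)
The paper does not prove Theorem~\ref{T-1.0}: it is quoted from the literature (Chen~\cite{chen:90}, with Cartan~\cite{cartan:33} for the general-position case $N=n$) and serves only as background for the paper's own result, Theorem~\ref{T-3.1}. So there is no proof in the paper to compare your proposal against.

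That said, your outline is the standard Cartan--Nochka argument and is correct in its essentials. It is also structurally identical to the method the paper uses for Theorem~\ref{T-3.1}: the paper's Lemmas~\ref{L-3.1} and~\ref{L-3.2} are precisely the Nochka weight lemma and product inequality you invoke; Lemma~\ref{L-3.5} packages the pointwise estimate obtained by combining these with the determinant identity; and Lemma~\ref{L-3.6} plays the role of the logarithmic-derivative lemma (in its difference form). Substituting the Wronskian for the Casorati determinant and the classical several-variables logarithmic-derivative lemma (Vitter, Biancofiore--Stoll) for Lemma~\ref{L-3.6} in that proof yields exactly the argument you sketch. The coefficient $\tfrac{N+1}{n+1}$ in front of $N(r,\nu^{0}_{W(f)})$ follows from $1/\tilde\omega \ge N/n \ge (N+1)/(n+1)$, the former inequality being the Nochka bound in Lemma~\ref{L-3.1}(iii); the paper in fact records the sharper constant $N/n$ in Theorem~\ref{T-3.1}.
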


Let $c\in \mathbb{C}^{m}.$ Throughout this paper, we denote by $\mathcal{M}_{m}$ the set of all meromorphic functions on $\mathbb{C}^{m},$ by $\mathcal{P}_{c}$ the set of all meromorphic functions of $\mathcal{M}_{m}$ periodic with period $c,$ and by $\mathcal{P}_{c}^{\lambda}$ the set of all meromorphic functions of $\mathcal{M}_{m}$ periodic with period $c$ and having their hyperorders strictly less than $\lambda.$ Obviously, $\mathcal{M}_{m}\supset\mathcal{P}_{c}\supset\mathcal{P}_{c}^{\lambda}.$\par

In 2006, R. G. Halburd and R. J. Korhonen \cite{halburdk:06AASFM} considered the second main theorem for complex difference operator with finite order in the complex plane. Later, in \cite{wonglw:09} and \cite[Theorem 2.1]{halburdkt:14} difference analogues of the second main theorem for holomorphic curves  in $\mathbb{P}^{n}(\mathbb{C})$ were obtained independently, and in \cite[Theorem 3.3]{korhonen:12} and \cite[Theorems 1.6, 1.7]{cao:14} difference analogues of the second main theorem for meromorphic functions on $\mathbb{C}^{m}$ were obtained. In this paper, we will obtain a new natural difference analogue of Theorem \ref{T-1.0}, in which the counting function $N(r, \nu_{W(f)}^{0})$ of the Wronskian determinant of $f$ is replaced by the counting function $N(r, \nu_{C(f)}^{0})$ of the Casorati determinant of $f$ (it was called the finite difference Wronskian determinant in \cite{wonglw:09}). The hyperorder $\zeta_{2}(f)$ of meromorphic mapping $f:\mathbb{C}^{m}\rightarrow\mathbb{P}^{n}(\mathbb{C})$ is strictly less than one.\par

\begin{theorem}\label{T-3.1}  Let $c\in\mathbb{C}^{m},$ let $f:\mathbb{C}^{m}\rightarrow\mathbb{P}^{n}(\mathbb{C})$ be a linearly nondegenerate meromorphic mapping over $\mathcal{P}_{c}$ with hyperorder $\zeta=\zeta_{2}(f)<1,$ and let $H_{j}$ $(1\leq j\leq q)$ be $q(>2N-n+1)$ hyperplanes in $N$-subgeneral position in $\mathbb{P}^{n}(\mathbb{C}).$ Then we have
\begin{eqnarray*}
(q-2N+n-1)T_{f}(r)\leq\sum_{j=1}^{q}N(r, \nu_{(f, H_{j})}^{0})-\frac{N}{n}N(r, \nu_{C(f)}^{0})+o\left(\frac{T_{f}(r)}{r^{1-\zeta-\varepsilon}}\right)
\end{eqnarray*}
for all $r>0$ outside of a possible exceptional set $E\subset[1, +\infty)$ of finite logarithmic measure, where $C(f)$ is the Casorati determinant of $f.$
\end{theorem}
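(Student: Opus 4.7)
The plan is to follow the Weyl--Ahlfors--Chen proof of Theorem~\ref{T-1.0}, with two key substitutions: the Wronskian $W(f)$ is replaced by the Casorati determinant
$$C(f)(z)=\det\bigl(f_i(z+jc)\bigr)_{0\le i,j\le n},$$
where $\tilde{f}=(f_0,\ldots,f_n)$ is a reduced representation of $f$, and the classical logarithmic derivative lemma is replaced by its several-variable difference analogue. Linear nondegeneracy of $f$ over $\mathcal{P}_c$ is exactly what guarantees $C(f)\not\equiv 0$, mirroring the role of linear independence over $\C$ for $W(f)$ in the classical proof.

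First I would fix the reduced representation $\tilde{f}$ and set $(f,H_j)=L_j\circ\tilde{f}$ for each hyperplane $H_j$ with linear form $L_j$. Then I would invoke Nochka's lemma for $N$-subgeneral position: this produces rational weights $\omega_1,\ldots,\omega_q\in(0,1]$ and a Nochka constant $\tilde{\omega}\in(0,1]$ such that at each $z\in\C^m$ one can select an index set $J(z)\subset\{1,\ldots,q\}$ of cardinality $n+1$ with $\{L_j\}_{j\in J(z)}$ linearly independent, and such that the weighted sum $\sum_{j=1}^q\omega_j\log^+\bigl(\|\tilde{f}\|/|(f,H_j)|\bigr)$ is controlled by $\sum_{j\in J(z)}\log^+\bigl(\|\tilde{f}\|/|(f,H_j)|\bigr)$ up to a bounded error. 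This is the combinatorial step that converts the subgeneral-position problem into an effectively general-position one.

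Second, for each admissible subset $J\subset\{1,\ldots,q\}$ of size $n+1$ with $\{L_j\}_{j\in J}$ independent, I would compare $C(f)$ and $\prod_{j\in J}(f,H_j)$: a change of basis expresses $C(f)/\prod_{j\in J}(f,H_j)$ as a nonzero constant times a determinant whose entries are ratios of the form $(f,H_{j_i})(z+kc)/(f,H_{j_i})(z)$, so the several-variable difference logarithmic derivative lemma from \cite{cao:14} (see also \cite{halburdkt:14}) yields
$$m\!\left(r,\frac{C(f)}{\prod_{j\in J}(f,H_j)}\right)=o\!\left(\frac{T_f(r)}{r^{1-\zeta-\varepsilon}}\right)$$
outside an exceptional set of finite logarithmic measure. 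Summing these estimates over the finitely many admissible $J$ with the Nochka weights, applying Jensen's formula to $C(f)$, and using the first main theorem $T_f(r)=m(r,H_j,f)+N(r,\nu_{(f,H_j)}^0)+O(1)$, I would assemble the inequality claimed in the theorem.

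The main obstacle I anticipate is deriving the precise coefficient $N/n$ (in place of the classical $(N+1)/(n+1)$) in front of $N(r,\nu_{C(f)}^0)$; this should emerge from the fact that the column structure of the Casorati determinant---one shift per column, rather than one additional order of differentiation per column---interacts with the Nochka weights and the normalization of $\tilde{\omega}$ slightly differently from the Wronskian, so the weighted contribution of the $n+1$ columns totals $n$ rather than $n+1$. A secondary technical point is that the exceptional set produced by the difference logarithmic derivative lemma for each of the $\binom{q}{n+1}$ subsets $J$ must remain of finite logarithmic measure when their union is taken, which follows by a standard union-bound argument.
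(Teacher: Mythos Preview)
Your overall strategy---Nochka weights to reduce to an $(n+1)$-element index set, then a change of basis to write the Casorati determinant over a product of the selected $(f,H_j)$, then the several-variable difference logarithmic-derivative lemma, then Jensen and the first main theorem---is exactly the route the paper takes, so the architecture is right.

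However, your anticipated explanation for the coefficient $N/n$ is off the mark. It has nothing to do with the column structure of the Casorati determinant versus the Wronskian; the same coefficient would arise if one ran the identical Nochka argument with the Wronskian. The source is purely combinatorial: after the Nochka machinery one arrives at
\[
\tilde{\omega}(q-2N+n-1)T_f(r)\le \sum_{j}\omega_j N(r,\nu^0_{(f,H_j)})-N(r,\nu^0_{C(f)})+o\!\left(\frac{T_f(r)}{r^{1-\zeta-\varepsilon}}\right),
\]
and one then divides by $\tilde{\omega}$. Lemma~\ref{L-3.1}(iii) gives $\tilde{\omega}\le n/N$, hence $1/\tilde{\omega}\ge N/n$, and together with $\omega_j/\tilde{\omega}\le 1$ this immediately yields the stated inequality. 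The sharper $(N+1)/(n+1)$ in Theorem~\ref{T-1.0} comes from a different (and finer) handling of the Nochka data in Chen's argument, not from any feature of the Wronskian itself. So you should not look for a structural reason in the determinant; just invoke the Nochka-constant bound.

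One small technical point: the paper's pointwise inequality (Lemma~\ref{L-3.5}) carries \emph{shifted} factors $(\overline{f}^{[j]},H_{t_j})$ in the numerator, so after Jensen one obtains shifted counting functions $N(r+j|c|,\nu^0_{(f,H_{t_j})})$; these are converted back to unshifted ones via the hyperorder-$<1$ growth lemma. Your version with the unshifted product $\prod_{j\in J}(f,H_j)$ in the denominator also works and in fact sidesteps that correction, so this is a harmless variant.
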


The remainder of this paper is organized in the following way. In Section~\ref{prelimsec}, some notations and basic results of Nevanlinna theory are introduced briefly. In Section~\ref{proofsec}, we adopt the Cartan-Nochka's method \cite{noguchi:05} and use the Casorati determinant to prove Theorem \ref{T-3.1}, from which a defect relation is obtained in Section~\ref{defectsec}. In Section~\ref{uniquenesssec}, we show a uniqueness theorem for meromorphic mappings intersecting hyperplanes in $N$-subgeneral position with counting multiplicities, which can be seen as a Picard-type theorem, and will be proved as a special case from a difference analogue of generalized Picard theorem \cite{fujimoto:72b,green:72} in Section~\ref{picardsec}.

\section{Preliminaries}\label{prelimsec}

\noindent\textbf{2.1.} Set $\|z\|=(|z_{1}|^{2}+\cdots+|z_{m}|^{2})$ for $z=(z_{1},\cdots,z_{m})\in\mathbb{C}^{m},$  for $r>0,$ define
\begin{equation*}
B_{m}(r):={\{z\in\mathbb{C}^{m}:\|z\|\leq r}\},\quad
S_{m}(r):={\{z\in\mathbb{C}^{m}:\|z\|=r}\}. \end{equation*}
Let $d=\partial+\overline{\partial}, \quad d^{c}=(4\pi\sqrt{-1})^{-1}(\partial+\overline{\partial}).$ Write
\begin{equation*}
\sigma_{m}(z):=(dd^{c}\|z\|^{2})^{m-1},\quad
 \eta_{m}(z):=d^{c}\log\|z\|^{2}\wedge(dd^{c}\|z\|^{2})^{m-1}\end{equation*}
  for $z\in\mathbb{C}^{m}\setminus{\{0}\}.$\par

For a divisor $\nu$ on $\mathbb{C}^{m}$ we define the following counting functions of $\nu$ by
\begin{eqnarray*} n(t)=\left\{
                                    \begin{array}{ll}
                                      \int_{|\nu|\cap B(t)}\nu(z)\sigma_{m}(z), & \hbox{if $m\geq 2;$} \\
                                      \sum_{|z|\leq t}\nu(z), & \hbox{if $m=1,$}
                                    \end{array}
                                  \right.
\end{eqnarray*} and
\begin{equation*}
N(r, \nu)=\int_{1}^{r}\frac{n(t)}{t^{2m-1}}dt\quad
(1<r<\infty).\end{equation*}

Let $\varphi(\not\equiv 0)$ be an entire holomorphic function on $\mathbb{C}^{m}.$ For $a\in\mathbb{C}^{m},$ we write $\varphi(z)=\sum_{i=0}^{\infty}P_{i}(z-a),$ where the term $P_{i}$ is a homogeneous polynomial of degree~$i.$ We denote the zero-multiplicity of $\varphi$ at $a$ by $\nu_{\varphi}(a)=\min{\{i:P_{i}\not\equiv 0}\}.$ Thus we can define a divisor $\nu_{\varphi}$ such that $\nu_{\varphi}(z)$ equals the zero multiplicity of $\varphi$ at $z$ in the sense of \cite[Definition 2.1]{fujimoto:74} whenever $z$ is a regular point of an analytic set  $|\nu_{\varphi}|:=\overline{\{z\in \mathbb{C}^{m}: \nu_{\varphi}(z)\neq 0\}}.$ \par

Letting $h$ be a nonzero meromorphic function on $\mathbb{C}^{m}$ with $h=\frac{h_{0}}{h_{1}}$ on $\mathbb{C}^{m}$ and $\dim(h_{0}^{-1}(0)\cap h_{1}^{-1}(0))\leq m-2,$ we define $\nu_{h}^{0}:=\nu_{h_{0}},\nu_{h}^{\infty}:=\nu_{h_{1}}.$\par

For a meromorphic function $h$ on $\mathbb{C}^{m},$ we have the Jensen's theorem:
$$N(r, \nu_{h}^{0})-N(r, \nu_{h}^{\infty})=\int_{S_{m}(r)}\log|h|\eta_{m}(z)-\int_{S_{m}(1)}\log|h|\eta_{m}(z).$$
\par

\noindent\textbf{2.2.} A meromorphic mapping $f:\mathbb{C}^{m}\rightarrow\mathbb{P}^{n}(\mathbb{C})$ is a holomorphic mapping from $U$ into $\mathbb{P}^{n}(\mathbb{C}),$ where $U$  can be chosen so that $V\equiv \mathbb{C}^{m}\setminus U$ is an analytic subvariety of $\mathbb{C}^{m}$ of codimension at least $2.$ Furthermore $f$ can be represented by a holomorphic mapping of $\mathbb{C}^{m}$ to $\mathbb{C}^{n+1}$ such that $$V=I(f)=\{z\in\mathbb{C}^{m}: f_{0}(z)=\cdots=f_{n}(z)=0\},$$ where $f_{0}, \ldots, f_{n}$ are holomorphic functions on $\mathbb{C}^{m}.$ We say that $f=[f_{0}, \ldots, f_{n}]$ is a reduced representation of $f$ (the only factors common to $f_{0}, \ldots, f_{n}$ are units).  If $g=hf$ for $h$ any quotient of holomorphic functions on $\mathbb{C}^{m},$ then $g$ will be called a representation of $F$ (e.g. reduced iff $h$ is holomorphic and a unit). Set $\|f\|=(\sum_{j=0}^{n}|f_{j}|^{2})^{\frac{1}{2}}.$ The growth of meromorphic mapping $f$ is measured by its characteristic function
\begin{eqnarray*}
T_{f}(r)&=&\int_{r_{0}}^{r}\frac{dt}{t^{2m-1}}\int_{B_{m}(t)}dd^{c}\log\|f\|^{2}\wedge \sigma_{m}(z)\\
&=&\int_{S_{m}(r)}\log\|f\|\eta_{m}(z)-
 \int_{S_{m}(1)}\log\|f\|\eta_{m}(z)\\
 &=&\int_{S_{m}(r)}\log\max\{|f_{0}|, \ldots, |f_{n}|\}\eta_{m}(z)+O(1)\quad(r>r_{0}>1).\end{eqnarray*}
Note that $T_{f}(r)$ is independent of the choice of the reduced representation of $f.$ The order and hyper-order of $f$ are respectively defined by
$$\zeta(f):=\limsup_{r\rightarrow\infty}\frac{\log^{+} T_{f}(r)}{\log r}\quad\mbox{and}\quad \zeta_{2}(f):=\limsup_{r\rightarrow\infty}\frac{\log^{+}\log^{+} T_{f}(r)}{\log r},$$ where $\log^{+}x:=\max\{\log x, 0\}$ for any $x>0.$\par

 We say that a meromorphic mapping $f$ from $\mathbb{C}^{m}$ into $\mathbb{P}^{n}(\mathbb{C})$ with a reduced representation $[f_{0},\ldots, f_{n}]$ is linearly nondegenerate over $\mathcal{P}^{\lambda}_{c}$  if the entire functions $f_{0},$ $\ldots,$ $f_{n}$ are linearly independent over $\mathcal{P}^{\lambda}_{c},$ and say that $f$ is linearly nondegenerate over $\mathbb{C}^{1}$  if the entire functions $f_{0},$ $\ldots,$ $f_{n}$ are linearly independent over $\mathbb{C}^{1}.$ \par

\medskip

\noindent\textbf{2.3.} Let hyperplanes $H_{j}$ of $\mathbb{P}^{n}(\mathbb{C})$ be defined by
$$H_{j}:\quad h_{j0}w_{0}+\ldots+h_{jn}w_{n}=0\quad (1\leq j\leq q),$$ where $[w_{0}, \ldots, w_{n}]$ is a homogeneous coordinate system of $\mathbb{P}^{n}(\mathbb{C}).$  Suppose that $[f_{0}, \ldots, f_{n}]$ is a reduced representation of a meromorphic mapping $f:\mathbb{C}^{m}\rightarrow\mathbb{P}^{n}(\mathbb{C}),$ then we denote
$$(f, H_{j})=h_{j0}f_{0}+\ldots+h_{jn}f_{n}$$
which are entire functions on $\mathbb{C}^{m}$ for all $j\in\{1,\ldots, q\}.$

We say that $q$ hyperplanes $H_{j}$ $(1\leq j\leq q)$ are in $N$-subgeneral position of $\mathbb{P}^{n}(\mathbb{C})$ if
$$\bigcap_{j\in R}H_{j}=\emptyset$$ for any subset $R\subset Q=\{1, 2, \ldots, q\}$ with its cardinality $|R|=N+1\geq n+1.$ This is equivalent to that for an arbitrary $(N+1, n+1)$-matrix $(h_{jk})_{j\in R, 0\leq k\leq n},$
$$rank(h_{jk})_{j\in R, 0\leq k\leq n}=n+1.$$ If $H_{j}$ $(1\leq j\leq q)$ are in $n$-subgeneral position, we simply say that they are in general position.\par

We denote by $V(R)$ the vector subspace spanned by  $(h_{jk}w_{k})_{0\leq k\leq n},$ $j\in R\subset Q$ in $\mathbb{C}^{n+1},$ and $rk(R):=\dim V(R),$ $rk(\emptyset)=0.$\par

\medskip

\noindent\textbf{2.4.} Let a meromorphic mapping $f=[f_{0}, \ldots, f_{n}]$  from $\mathbb{C}^{m}$ into $\mathbb{P}^{n}(\mathbb{C})$ and a hyperplane $H$ of $\mathbb{P}^{n}(\mathbb{C})$ satisfy $(f, H)\not\equiv 0.$ The closeness of the image of a meromorphic mapping $f$ to intersecting $H$ is measured by the proximity function
$$m_{f, H}(r)=\int_{S_{m}(r)}\log^{+}\frac{\|f\|\cdot\|H\|}{|(f,H)|}\eta_{m}(z)-\int_{S_{m}(1)}\log^{+}\frac{\|f\|\cdot\|H\|}{|(f, H)|}\eta_{m}(z).$$
We have the first main theorem of Nevanlinna theory
$$T_{f}(r)=N(r, \nu_{(f, H)}^{0})+m_{f, H}(r)+O(1)\quad (r>1).$$

\medskip

\noindent\textbf{2.5.} Let $f$ be a meromorphic mapping from $\mathbb{C}^{m}$ into $\mathbb{P}^{n}(\mathbb{C}).$ For $c=(c_{1}, \ldots, c_{m})$ and  $z=(z_{1}, \ldots, z_{m}),$ we write $c+z=(c_{1}+z_{1},\ldots, c_{m}+z_{m}),$ $cz=(c_{1}z_{1},\ldots, c_{m}z_{m}).$ Denote the $c$-difference operator by
$$\Delta_{c}f(z):=f(c+z)-f(z).$$
We use the short notations
$$f(z)\equiv f:=\overline{f}^{[0]},\,  f(z+c)\equiv \overline{f}:=\overline{f}^{[1]},\,  f(z+2c)\equiv \overline{\overline{f}}\equiv \overline{f}^{[2]}, \ldots, f(z+kc)\equiv \overline{f}^{[k]}.$$
\par

Assume that $f$ has a reduced representation $[f_{0}, \ldots, f_{n}].$ Let
$$D^{(j)}=\left(\frac{\partial}{\partial z_{1}}\right)^{\alpha_{1}(j)} \cdots\left(\frac{\partial}{\partial z_{m}}\right)^{\alpha_{m}(j)}$$
be a partial differentiation operator of order at most $j=\sum_{k=1}^{m}\alpha_{k}(j).$ Similarly as the Wronskian determinant
$$W(f)=W(f_{0}, \ldots, f_{n})=\left|\begin{array}{cccc}
                                 f_{0} & f_{1} & \cdots & f_{n} \\
                                 D^{(1)}f_{0} & D^{(1)}f_{1} & \cdots & D^{(1)}f_{n} \\
                                 \vdots & \vdots & \ddots & \vdots \\
                                 D^{(n)}f_{0} & D^{(n)}f_{1} & \cdots & D^{(n)}f_{n}
                               \end{array}\right|,
$$ the Casorati determinant is defined by $$C(f)=C(f_{0}, \ldots, f_{n})=\left|\begin{array}{cccc}
                                 f_{0} & f_{1} & \cdots & f_{n} \\
                                 \overline{f}_{0} & \overline{f}_{1} & \cdots & \overline{f}_{n} \\
                                 \vdots & \vdots & \ddots & \vdots \\
                                 \overline{f}_{0}^{[n]} & \overline{f}_{1}^{[n]} & \cdots & \overline{f}_{n}^{[n]}
                             \end{array}\right|.
$$\par

For a subset $R\subset Q=\{1, \ldots, q\}$ such that $|R|=n+1,$ we denote by $$C(((f, H_{j}), j\in R))$$ the Casorati determinant of $(f, H_{j}), j\in R$ with increasing order of indices. \par

\section{Proof of Theorem \ref{T-3.1}}\label{proofsec}
We recall two lemmas due to Nochka (see \cite{chen:90,fujimoto:93,nochka:83,noguchi:05}) as follows. \par

\begin{lemma}[\cite{chen:90,fujimoto:93,nochka:83,noguchi:05}]\label{L-3.1} Let $H_{j},$ $j\in Q=\{1,2,\ldots,q\}$ be hyperplanes of $\mathbb{P}^{n}(\mathbb{C})$ in $N$-subgeneral position, and assume that $q>2N-n+1.$ Then there are positive rational constants $\omega(j),$ $j\in Q$ satisfying the following:\par
(i) $0<\omega(j)\leq q$ for all $j\in Q.$\par
(ii) Setting $\tilde{\omega}=\max_{j\in Q}\omega(j),$ one gets
$$\sum_{j=1}^{q}\omega(j)=\tilde{\omega}(q-2N+n-1)+n+1.$$\par
(iii) $\frac{n+1}{2N-N+1}\leq \tilde{\omega}\leq\frac{n}{N}.$\par
(iv) For $R\subset Q$ with $0<|R|\leq N+1,$ $\sum_{j\in R}\omega(j)\leq rk(R).$
\end{lemma}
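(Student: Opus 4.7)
The plan is to construct the weights $\omega(j)$ by induction on the excess parameter $d := N - n \geq 0$, following Nochka's original algorithmic approach. In the base case $d = 0$, the hyperplanes lie in general position, so $rk(R) = |R|$ whenever $0 < |R| \leq n+1$. Setting $\omega(j) := 1$ for every $j \in Q$ and $\tilde\omega := 1$, properties (i) and (iv) are immediate, (ii) becomes the identity $q = (q - n - 1) + (n+1)$, and (iii) collapses to $1 \leq 1 \leq 1$.

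For the inductive step $d \geq 1$, I would isolate a ``critical'' subset $S^* \subset Q$ and reduce to a smaller problem. First, among all $S \subset Q$ with $2 \leq |S| \leq N+1$ and $|S| > rk(S)$, I would select $S^*$ extremal for a Nochka-type ratio (for instance, one minimizing $(rk(S) - 1)/(|S| - 1)$, capturing maximal relative rank deficiency); existence is guaranteed since any $S$ with $|S| = N+1$ already satisfies $|S| > n+1 \geq rk(S)$. Next, I would pass to a reduced configuration by collapsing $S^*$, for instance by replacing the hyperplanes indexed by $S^*$ with a minimal subfamily of the same linear span, and verify that the result lies in $N'$-subgeneral position in $\mathbb{P}^n(\mathbb{C})$ with $N' - n < d$ while still meeting the cardinality bound $q' > 2N' - n + 1$ needed to invoke the induction. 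Applying induction to produce weights $\omega'$ on the reduced system, I would then define $\omega(j) := \omega'(j)$ for $j \notin S^*$ and assign a common constant $c^*$ on $S^*$ chosen so that the identity (ii) holds for the extended weights. Properties (i) and (iii) then follow from (ii), the extremality of $S^*$ (which feeds into the upper bound $\tilde\omega \leq n/N$), and the inductive positivity of $\omega'$.

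The principal obstacle is verifying the subadditivity inequality (iv),
$$\sum_{j \in R} \omega(j) \leq rk(R), \quad 0 < |R| \leq N+1,$$
for every subset $R \subset Q$. One splits on the intersection pattern of $R$ with $S^*$: when $R \subset S^*$ or $R \cap S^* = \emptyset$, the inequality reduces respectively to a direct rank estimate on $S^*$ and to the inductive inequality for $\omega'$; when $R$ straddles $S^*$, the contribution of $R \cap S^*$ must be bounded using the extremal ratio defining $S^*$, and then combined with the inductive inequality applied to an appropriate modified subset (replacing $R \cap S^*$ by a linear-algebraic proxy) in the reduced configuration. The whole scheme hinges on choosing in the first step precisely the ratio that makes this straddling case close, and much of the technical content of Nochka's argument consists in pinning down that choice and executing the case analysis with the correct bookkeeping on ranks.
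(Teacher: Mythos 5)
The paper does not prove this lemma at all: it is quoted verbatim as a classical result of Nochka, with citations to Nochka, Chen, Fujimoto and Noguchi, so your proposal has to stand entirely on its own. As written, it does not. The core of Nochka's theorem is precisely item (iv) together with the identification of $\tilde{\omega}$, and your outline defers exactly those points: you leave the extremal functional unspecified (your candidate $(rk(S)-1)/(|S|-1)$ is not the one that occurs in any of the standard arguments, which use ratios of the shape $\bigl(n+1-rk(S)\bigr)/\bigl(2N-n+1-|S|\bigr)$), and you explicitly concede that the straddling case of (iv) is open and that "much of the technical content" remains to be pinned down. A proof sketch that postpones the only hard step is a gap, not a proof.

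Beyond incompleteness, the proposed induction on $d=N-n$ has concrete structural problems. First, the reduction step is not well defined: after replacing the hyperplanes indexed by $S^{*}$ by a spanning subfamily, you do not identify the new parameter $N'$, do not show $N'-n<d$, and do not verify the hypothesis $q'>2N'-n+1$ needed to apply the inductive assumption; none of these is automatic, since subgeneral position is a condition on \emph{all} $(N+1)$-element subsets and collapsing one rank-deficient set need not improve the others. Second, defining the common value $c^{*}$ on $S^{*}$ "so that (ii) holds" is circular: the right-hand side of (ii) involves $\tilde{\omega}=\max_{j}\omega(j)$, which itself depends on whether $c^{*}$ exceeds $\max_{j\notin S^{*}}\omega'(j)$; and even granting a solution, positivity of $c^{*}$ and the two-sided bound (iii) do not follow. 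Third, a single extremal set $S^{*}$ is in general insufficient: the known constructions (Nochka, Chen, Vojta's exposition, Fujimoto's book) require an increasing \emph{chain} $\emptyset\subsetneq S_{1}\subsetneq\cdots\subsetneq S_{s}$ of extremal subsets, with the weights constant on each layer $S_{i}\setminus S_{i-1}$ and equal to $\tilde{\omega}$ off $S_{s}$, because rank deficiency occurs at several nested scales simultaneously; one collapse per induction step does not reproduce this structure, and the verification of (iv) for sets meeting several layers is where the actual work lies. If you want a complete argument, I would abandon the induction on $N-n$ and follow the chain construction directly, or simply cite the lemma as the paper does.
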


The above $\omega(j)$ and $\tilde{\omega}$ are called the Nochka weights and the Nochka constant, respectively.\par

\begin{lemma}[\cite{chen:90,fujimoto:93,nochka:83,noguchi:05}] \label{L-3.2}
Let $H_{j},$ $j\in Q=\{1,2,\ldots,q\}$, be hyperplanes of $\mathbb{P}^{n}(\mathbb{C})$ in $N$-subgeneral position, and assume that $q>2N-n+1.$ Let $\{\omega(j)\}$ be their Nochka weights.\par

Let $E_{j}\geq 1,$ $j\in Q$ be arbitrarily given numbers. Then for every subset $R\subset Q$ with $0<|R|\leq N+1,$ there are distinct indices $j_{1}, \ldots, j_{rk(R)}\in Q$ such that $rk(\{j_{l}\}^{rk(R)}_{l=1})=rk(R)$ and
$$\prod_{j\in R}E_{j}^{\omega(j)}\leq \prod_{l=1}^{rk(R)}E_{jl}.$$
\end{lemma}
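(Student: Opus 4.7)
My plan is to reduce the multiplicative inequality to an additive one by taking logarithms and then use an Abel summation (summation by parts) argument, together with property (iv) of Lemma~\ref{L-3.1}. Set $s=|R|$ and relabel the elements of $R$ as $j_{1},\ldots,j_{s}$ so that
\[
E_{j_{1}}\ge E_{j_{2}}\ge\cdots\ge E_{j_{s}}\ge 1.
\]
Writing $a_{k}=\omega(j_{k})$, $b_{k}=\log E_{j_{k}}\ge 0$, and $S_{k}=a_{1}+\cdots+a_{k}$ (with $S_{0}=0$), the claim is equivalent to
\[
\sum_{k=1}^{s} a_{k}b_{k}\le \sum_{l=1}^{rk(R)} b_{k_{l}}
\]
for some suitable choice of indices $k_{1}<\cdots<k_{rk(R)}$.

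For each $k$ put $R_{k}=\{j_{1},\ldots,j_{k}\}$ and $r_{k}=rk(R_{k})$, with $r_{0}=0$. Since $|R_{k}|\le |R|\le N+1$, Lemma~\ref{L-3.1}(iv) gives $S_{k}\le r_{k}$ for every $k$. Abel summation yields
\[
\sum_{k=1}^{s} a_{k}b_{k}=S_{s}b_{s}+\sum_{k=1}^{s-1} S_{k}(b_{k}-b_{k+1}).
\]
Because the differences $b_{k}-b_{k+1}$ and $b_{s}$ are all nonnegative, the estimate $S_{k}\le r_{k}$ can be inserted term by term, and a second application of Abel summation (this time in reverse) collapses the right‑hand side to
\[
\sum_{k=1}^{s}(r_{k}-r_{k-1})\,b_{k}.
\]
Since each increment $r_{k}-r_{k-1}$ equals either $0$ or $1$, letting $K=\{k:r_{k}-r_{k-1}=1\}$ gives $|K|=r_{s}=rk(R)$ and
\[
\sum_{k=1}^{s}\omega(j_{k})\log E_{j_{k}}\le \sum_{k\in K}\log E_{j_{k}}.
\]

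It remains to verify that the indices $\{j_{k}\}_{k\in K}$ meet the rank condition in the lemma. By construction $k\in K$ exactly when adjoining $j_{k}$ to $R_{k-1}$ strictly increases the rank, so the hyperplanes $\{H_{j_{k}}\}_{k\in K}$ are linearly independent; hence $rk(\{j_{k}\}_{k\in K})=|K|=rk(R)$, and after exponentiating we obtain the desired inequality. I expect no real obstacle: the only subtle point is the reordering step, which is legitimate because the inequality to be proved is symmetric in the pairs $(\omega(j),E_{j})$ on the left while the right only requires the existence of some tuple of indices.
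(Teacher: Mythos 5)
The paper does not prove this lemma at all: it is quoted as a known result of Nochka, with the proof delegated to the cited references (Nochka, Chen, Fujimoto, Noguchi). Your argument is correct and is essentially the standard proof found in those sources: order the $E_{j}$ decreasingly, pass to logarithms, apply Abel summation, insert the Nochka-weight bound $\sum_{j\in R_{k}}\omega(j)\le rk(R_{k})$ from Lemma~\ref{L-3.1}(iv) (valid since $|R_{k}|\le|R|\le N+1$ and the coefficients $b_{k}-b_{k+1}$ and $b_{s}$ are nonnegative, which is exactly where the hypotheses $E_{j}\ge 1$ and the chosen ordering are used), and then read off the surviving indices as those $k$ at which $rk(R_{k})$ jumps. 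Your final verification that these indices carry linearly independent forms, hence realize $rk(\{j_{k}\}_{k\in K})=|K|=rk(R)$, is also sound, since each selected form lies outside the span of all previously adjoined ones. I see no gap.
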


It is known that holomorphic functions $g_{0}, \ldots, g_{n}$ on $\mathbb{C}^{m}$ are linearly dependent over $\mathbb{C}^{m}$ if and only if their Wronskian determinant $W(g_{0},\ldots, g_{n})$ vanishes identically \cite[Prop. 4.5]{fujimoto:85}. It was mentioned in \cite[Remark 2.6]{wonglw:09} without proof that holomorphic functions $g_{0}, \ldots, g_{n}$ on $\mathbb{C}$ are linearly dependent over $\mathcal{P}_{c}$ if and only if their Casorati determinant $C(g_{0},\ldots, g_{n})$ vanishes identically. The proof of this fact can be seen in the proof of \cite[Lemma 3.2]{halburdkt:14} which, in fact, is a more accurate result because it takes into account the growth order of functions. Here we introduce extensions of these results for the case of several complex variables. \par

\begin{lemma}\label{L-3.4} (i) Let $c\in\mathbb{C}^{m}.$
A meromorphic mapping $f:\mathbb{C}^{m}\rightarrow\mathbb{P}^{n}(\mathbb{C})$ with a reduced representation $[f_{0}, \ldots, f_{n}]$ satisfies $C(f_{0}, \ldots, f_{n})\not\equiv 0$ if and only if $f$ is linearly nondegenerate over the field $\mathcal{P}_{c}.$\par

(ii) Let $c\in\mathbb{C}^{m}.$
If a meromorphic mapping $f:\mathbb{C}^{m}\rightarrow\mathbb{P}^{n}(\mathbb{C})$ with a reduced representation $[f_{0}, \ldots, f_{n}]$ satisfies  $\zeta_{2}(f)<\lambda<+\infty,$ then $C(f_{0}, \ldots, f_{n})\not\equiv 0$ if and only if $f$ is linearly nondegenerate over the field $\mathcal{P}_{c}^{\lambda}(\subset\mathcal{P}_{c}).$
\end{lemma}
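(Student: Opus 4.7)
The plan is to prove both parts of the lemma simultaneously by induction on $n$, since they differ only in the field of coefficients, and to treat the growth condition in (ii) as a single bookkeeping step added at the end.

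The easy direction in both parts is that linear degeneracy forces $C(f_0, \ldots, f_n) \equiv 0$. Suppose $\sum_{i=0}^{n} a_i f_i \equiv 0$ for some $a_i \in \mathcal{P}_c$ (respectively $\mathcal{P}_c^\lambda$) not all zero. Substituting $z + kc$ for $z$ and using $c$-periodicity $a_i(z+kc) = a_i(z)$, I obtain $\sum_{i=0}^{n} a_i(z)\, \overline{f}_i^{[k]}(z) = 0$ for $k = 0, 1, \ldots, n$. Thus the columns of the Casorati matrix satisfy a nontrivial meromorphic linear relation over the same coefficients $a_i(z)$, which forces $C(f_0, \ldots, f_n) \equiv 0$. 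This is where $c$-periodicity is used in an essential way.

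For the converse I induct on $n$; the base case $n=0$ is trivial since $C(f_0) = f_0$. For the inductive step, assume $C(f_0, \ldots, f_n) \equiv 0$. If the smaller Casorati $C(f_0, \ldots, f_{n-1})$ also vanishes identically, the induction hypothesis yields a $\mathcal{P}_c$-linear dependence among $f_0, \ldots, f_{n-1}$ that extends trivially to $f_0, \ldots, f_n$. Otherwise $C(f_0, \ldots, f_{n-1}) \not\equiv 0$, and the $n\times n$ system
$$\sum_{j=0}^{n-1} a_j(z)\, \overline{f}_j^{[k]}(z) = -\overline{f}_n^{[k]}(z), \qquad k = 0, 1, \ldots, n-1,$$
has a unique meromorphic solution $a_0, \ldots, a_{n-1}$ by Cramer's rule, with each $a_j$ a ratio of two $n \times n$ subdeterminants of the full Casorati matrix. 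The hypothesis $C(f_0, \ldots, f_n) \equiv 0$ is exactly what ensures the remaining equation $(k=n)$ is automatically satisfied, so $\sum_{j=0}^{n-1} a_j f_j + f_n \equiv 0$ as meromorphic functions.

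The crux is to promote these $a_j$ from merely meromorphic to $c$-periodic. I shift the defining system by $z \mapsto z + c$, which reindexes the shifts and yields the same system with $a_j(z)$ replaced by $a_j(z + c)$; subtracting the original gives
$$\sum_{j=0}^{n-1} \bigl(a_j(z+c) - a_j(z)\bigr)\, \overline{f}_j^{[k]}(z) = 0, \qquad k = 1, 2, \ldots, n,$$
a homogeneous $n\times n$ system whose coefficient determinant equals $C(f_0, \ldots, f_{n-1})(z+c) \not\equiv 0$. Hence $a_j(z+c) \equiv a_j(z)$ for each $j$, i.e., $a_j \in \mathcal{P}_c$. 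For part (ii) I then note that each $a_j$ is built from finitely many $c$-shifts of the entire functions $f_i$ by determinants and a single division; since $T_{f_i}(r) \leq T_f(r) + O(1)$ so $\zeta_2(f_i) \leq \zeta_2(f)$, and hyperorder is invariant under translation and subadditive under sums, products and quotients, we conclude $\zeta_2(a_j) \leq \zeta_2(f) < \lambda$, so $a_j \in \mathcal{P}_c^{\lambda}$.

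The main obstacle is precisely this $c$-periodicity of the $a_j$; the shift-subtraction trick resolves it cleanly, and carries over from $m=1$ (the one-variable case in \cite{halburdkt:14}) to general $m$ with no dimension-dependent surgery, because the shift $z \mapsto z+c$, Cramer's rule, and the hyperorder are all defined and well-behaved on $\mathbb{C}^m$.
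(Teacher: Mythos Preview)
Your argument is correct and is essentially the same as the paper's: the paper's proof simply records the growth estimate $T_{g(z+c)}(r)=O(T_{g}(r+\|c\|))$ and then defers to the one-variable argument of \cite[Lemma~3.2]{halburdkt:14}, which is exactly the Cramer--shift--subtract argument you spell out, with the hyperorder bookkeeping for (ii) handled at the end. One small remark: in Case~1 of your induction (when $C(f_0,\ldots,f_{n-1})\equiv 0$) the induction hypothesis for part~(ii) should be phrased for tuples of entire functions with $\max_i\zeta_2(f_i)<\lambda$ rather than for reduced representations of maps, since $[f_0,\ldots,f_{n-1}]$ need not be reduced; this is harmless because you have already observed $\zeta_2(f_i)\le\zeta_2(f)<\lambda$.
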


\begin{proof}
By the definition of the characteristic function of $f$ and using similar discussion as in \cite[Page 47]{goldbergo:08}, it is not difficult to get that for any meromorphic function $g$ on $\mathbb{C}^{m}$ and $c\in \mathbb{C}^{m}$
$$T_{g(z+c)}(r)=O\left(T_{g(z)}(r+||c||)\right).$$
Then considering the above fact and making use of almost the same discussion as in \cite[Lemma 3.2]{halburdkt:14}, one can complete the proof of (ii). To prove (i) it is just not necessary to consider the growth of $f$ in the proof of (ii). We omit the details.
\end{proof}

\begin{lemma}\label{L-3.5} Let $q>2N-n+1,$ $Q=\{1,\ldots, q\}.$ Suppose that $f:\mathbb{C}^{m}\rightarrow\mathbb{P}^{n}(\mathbb{C})$ is a linearly nondegenerate meromorphic mapping over $\mathcal{P}_{c},$ and $H_{j}$ $(j\in Q)$ are hyperplanes of $\mathbb{P}^{n}(\mathbb{C})$ in $N$-subgeneral position. Let $\omega(j),$ $\tilde{\omega}$ be the Nochka weights and Nochka constant of $\{H_{j}\}_{j\in Q}$ respectively. Then we get that
\begin{eqnarray*}
  \|f\|^{\tilde{\omega}(q-2N+n-1)}&\leq& K\cdot\frac{\left(\prod_{t_{j}\in R}|(\overline{f}^{[j]}, H_{t_{j}})|^{\omega(t_{j})}\right)\cdot\left(\prod_{j\in S}|(f, H_{j})|^{\omega(j)}\right)}{|C(f_{0}, \ldots, f_{n})|}\\&&\cdot\frac{|C(((f, H_{j}), j\in R^{o}))|}{|(f, H_{t_{0}})(\overline{f}, H_{t_{1}})\cdots (\overline{f}^{[n]}, H_{t_{n}})|}
\end{eqnarray*}  for an arbitrary $$z\in \mathbb{C}^{m}\setminus\left(\left\{z\in \mathbb{C}^{m}:\left(\prod_{t_{j}\in R}|\overline{g}^{[j]}_{t_{j}}|^{\omega(t_{j})}\right)\cdot\left(\prod_{j\in S}|(f, H_{j})|^{\omega(j)}\right)=0\right\}\cup I(f)\right),$$ where $K$ depends on $\{H_{j}\}_{j\in Q},$ and $R^{o}, R, S$ are some subsets of $Q$ such that
$$R^{o}=\{t_{0}, t_{1}, \ldots, t_{n}\}\subset R=\{t_{0}, t_{1}, \ldots, t_{n}, t_{n+1}, \ldots, t_{N}\}\subset Q\setminus S.$$
\end{lemma}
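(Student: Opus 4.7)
The plan is to prove this pointwise estimate by adapting the classical Cartan--Nochka weighting method, with the Casorati determinant taking over the role played by the Wronskian determinant in the Weyl--Ahlfors argument.

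First I would record the algebraic identity that underpins the whole argument. For any $(n+1)$-subset $R^o = \{t_0, \ldots, t_n\} \subset Q$ with $\mathrm{rk}(R^o) = n+1$, the multi-linearity of the determinant applied to the expansion $(f, H_{t_i})(z + kc) = \sum_{l} h_{t_i, l}\,\overline{f}_l^{[k]}(z)$ gives
$$C\bigl(((f, H_j),\; j \in R^o)\bigr) = \det\bigl(h_{t_i, l}\bigr)_{0 \leq i, l \leq n} \cdot C(f_0, \ldots, f_n).$$
Since $R^o$ has full rank, $\det(h_{t_i, l})$ is a nonzero constant determined by the hyperplanes, so $|C_R|/|C(f_0,\ldots,f_n)|$ is a nonzero constant on the complement of the zero set of $C(f_0,\ldots,f_n)$.

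Next, fix $z$ outside the exceptional set (where denominators would vanish, an analytic set of codimension at least one together with $I(f)$). I would perform the standard Cartan--Nochka sorting: choose $R = \{t_0, \ldots, t_N\} \subset Q$ of cardinality $N+1$ and set $S = Q \setminus R$, labeling the elements of $R$ so that the $j$-th element $t_j$ is matched with the shift $z + jc$ and the quantities $|(\overline{f}^{[j]}, H_{t_j})(z)|/\|\overline{f}^{[j]}(z)\|$ (for $j = 0,\ldots,N$) exhaust the $N+1$ ``smallest'' indicators across $Q$. By the $N$-subgeneral position of $\{H_j\}$ together with a standard compactness argument on $\mathbb{P}^n(\mathbb{C})$, there is a constant $c_1 > 0$, depending only on $\{H_j\}_{j \in Q}$, for which $|(f, H_j)(z)| \geq c_1 \|f(z)\|$ whenever $j \in S$; hence each factor $(\|f\|/|(f, H_j)|)^{\omega(j)}$ for $j \in S$ only contributes a bounded constant.

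With this data set up, I apply Nochka's weight lemma (Lemma~\ref{L-3.2}) to $R$ with $E_{t_j} := \|\overline{f}^{[j]}\|/|(\overline{f}^{[j]}, H_{t_j})| \geq 1$, obtaining $R^o = \{t_0,\ldots,t_n\} \subset R$ of rank $n+1$ with
$$\prod_{j=0}^{N} E_{t_j}^{\omega(t_j)} \leq \prod_{i=0}^{n} E_{t_i}.$$
Using Lemma~\ref{L-3.1}(ii) to write $\sum_{j \in Q} \omega(j) = \tilde{\omega}(q-2N+n-1) + n + 1$, and combining with the compactness estimate on $S$, the cancellations yield
$$\|f\|^{\tilde{\omega}(q-2N+n-1)} \leq K_1 \cdot \frac{\prod_{t_j \in R} |(\overline{f}^{[j]}, H_{t_j})|^{\omega(t_j)} \cdot \prod_{j \in S} |(f, H_j)|^{\omega(j)}}{\prod_{i=0}^{n} |(\overline{f}^{[i]}, H_{t_i})|}.$$
Multiplying and dividing by $|C_R|/|C(f_0,\ldots,f_n)| = |\det(h_{t_i, l})|$ and absorbing this bounded factor into the constant gives exactly the formulation in the lemma, with $K$ depending only on $\{H_j\}_{j \in Q}$.

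The main obstacle is the coherent labeling of $R = \{t_0, \ldots, t_N\}$ across the $N+1$ distinct shift levels: at each shift one must assign a distinct hyperplane, and the Nochka weighting must remain applicable to the resulting mixed product. In the classical (unshifted) case one simply sorts by $|(f, H_j)(z)|$ at $z$, whereas here a matching-type argument is needed to produce a single labeling of $R$ that is simultaneously compatible with the shifts $[0],[1],\ldots,[N]$ appearing in the numerator and denominator of the target inequality. Once such a labeling is fixed, the Casorati identity from the first step slots in cleanly in place of the Wronskian factorization used in Theorem~\ref{T-1.0}, and the rest is bookkeeping.
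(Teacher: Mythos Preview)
Your overall strategy matches the paper's: split $Q=R\sqcup S$ at each point, bound the $S$-factors by compactness, apply Lemma~\ref{L-3.2} to the $R$-factors, use Lemma~\ref{L-3.1}(ii), and insert the Casorati identity $C(((f,H_j),j\in R^o))=\det(h_{t_i,l})\cdot C(f)$. However, there is a genuine gap in your ``cancellations yield'' step, and your diagnosis of the ``main obstacle'' is off-target.

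The gap is your choice $E_{t_j}=\|\overline{f}^{[j]}\|/|(\overline{f}^{[j]},H_{t_j})|$ with \emph{shifted} norms. This does guarantee $E_{t_j}\geq 1$, but after Lemma~\ref{L-3.2} and the $S$-bound you only get
\[
\frac{\prod_{t_j\in R}|(\overline{f}^{[j]},H_{t_j})|^{\omega(t_j)}\cdot\prod_{j\in S}|(f,H_j)|^{\omega(j)}}{\prod_{t_i\in R^o}|(\overline{f}^{[i]},H_{t_i})|}
\;\geq\;(\mathrm{const})\cdot\|f\|^{\sum_{j\in S}\omega(j)}\cdot\frac{\prod_{t_j\in R}\|\overline{f}^{[j]}\|^{\omega(t_j)}}{\prod_{t_i\in R^o}\|\overline{f}^{[i]}\|},
\]
and there is no pointwise estimate $\prod_{R}\|\overline{f}^{[j]}\|^{\omega(t_j)}\big/\prod_{R^o}\|\overline{f}^{[i]}\|\geq c\,\|f\|^{\sum_{R}\omega(t_j)-(n+1)}$ with $c$ depending only on $\{H_j\}$: the shifted norms $\|\overline{f}^{[j]}\|$ are not controlled by $\|f\|$ at the same $z$. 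The paper sidesteps this by using the \emph{unshifted} norm throughout, setting $E_{t_j}=\|f(z)\|\,K_{t_j}/|(\overline{f}^{[j]},H_{t_j})|$; then every power of $\|f\|$ combines via Lemma~\ref{L-3.1}(ii) directly into $\|f\|^{\tilde\omega(q-2N+n-1)}$, and only hyperplane constants survive.

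As for your ``main obstacle'': the paper performs no matching across shift levels at all. It selects $S$ (hence $R=Q\setminus S$) purely from the unshifted ratios $|(f,H_j)(z)|/\|f(z)\|$ at the single point $z$, using $N$-subgeneral position, then labels $R=\{t_0,\ldots,t_N\}$ arbitrarily and simply assigns shift $[j]$ to $t_j$ after the fact. Lemma~\ref{L-3.2} and the Casorati identity hold for any such labeling, so the sorting at shifted points that you describe is unnecessary.
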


\begin{proof}
Since the hyperplanes $\{H_{j}\}_{j=1}^{q}$ are in $N$-subgeneral position of $\mathbb{P}^{n}(\mathbb{C}),$ we have $\bigcap_{j\in R}H_{j}=\emptyset$ for any $R\subset Q$ with $|R|=N+1.$ This implies that there exists a subset $S\subset Q$ with $|S|=q-N-1$ such that $\prod_{j\in S}H_{j}(w)\neq 0.$ Let $I(f)=\{z: f_{0}(z)=f_{2}(z)=\cdots=f_{n}(z)=0\}$ with its codimension $\geq 2.$ For arbitrary fixed point $z\in \mathbb{C}^{m}\setminus\left(\cup_{j\in Q}\{z\in \mathbb{C}^{m}: (\overline{f}^{{[k_{j}]}}, H_{j})=0\}\cup I(f)\right)$ (so, $f(z)\in\mathbb{P}^{n}(\mathbb{C})$ and $(\overline{f}^{[k_{j}]}, H_{j})\in\mathbb{C}^{1}$),  there is a positive constant $K_{jk}$ which depends on $H_{j}$ and $k_{j}\in\mathbb{N}\cup\{0\}$ such that
\begin{equation}\label{E-3.1}
  \frac{1}{|K_{jk}|}\leq \frac{|(\overline{f}^{[k_{j}]}, H_{j})|}{\|f(z)\|}\leq |K_{jk}|
\end{equation} for $j\in S.$\par

Below we set $R=Q\setminus S.$ Then we have $|R|=N+1$ and $rk(R)=n+1.$ Then
\begin{equation}\label{E-3.2}
\prod_{j\in S}\left(\frac{(\overline{f}^{[k_{j}]}, H_{j})}{\|f(z)\||K_{jk}|}\right)^{\omega(j)}=\prod_{j\in R}\left(\frac{\|f(z)\||K_{jk}|}{(\overline{f}^{[k_{j}]}, H_{j})}\right)^{\omega(j)}\cdot\frac{\prod_{j\in Q}|(\overline{f}^{[k_{j}]}, H_{j})|^{\omega(j)}}{(\|f(z)\||K_{jk}|)^{\sum_{j\in Q}\omega(j)}}.
\end{equation}

By Lemma \ref{L-3.1} (ii), for $R=Q\setminus S$ we have
\begin{equation}\label{E-3.3}\sum_{j=1}^{q}\omega(j)=\tilde{\omega}(q-2N+n-1)+n+1.\end{equation}

Replacing $E_{j}$ by $\frac{\|f(z)\||K_{jk}|}{|(\overline{f}^{[k_{j}]}, H_{j})|}$ and making use of Lemma \ref{L-3.2}, for $R=Q\setminus S$ there is a subset $R^{o}=\{j_{1}, \ldots, j_{rk(R)}\}\subset R$ such that $|R^{o}|=rk(\{j_{l}\}^{rk(R)}_{l=1})=rk(R)=n+1$ and
\begin{equation}\label{E-3.4}\prod_{j\in R}\left(\frac{\|f(z)\||K_{jk}|}{|(\overline{f}^{[k_{j}]}, H_{j})|}\right)^{\omega(j)}\leq \prod_{j\in R^{o}}\frac{\|f(z)\||K_{jk}|}{|(\overline{f}^{[k_{j}]}, H_{j})|}.\end{equation}

Since $f$ is linearly non-degenerate over $\mathcal{P}_{c},$ by Lemma \ref{L-3.4} we get $C(f_{0}, \ldots, f_{n})\not\equiv 0.$
Since $\{H_{j}\}_{j=1}^{q}$ are in $N$-subgeneral position, there exists a non-singular matrix $B$ depending on $\{H_{j}\}_{j\in R^{o}}$ such that
 \begin{eqnarray*}
&& C(((f, H_{j}), j\in R^{o}))=C\left((f_{0}, f_{1},\ldots, f_{n})B\right)=C(f_{0}, f_{1},\ldots, f_{n})\times \det B.
\end{eqnarray*}
So, $C(((f, H_{j}), j\in R^{o}))\not\equiv 0.$  Hence, there is a positive constant $K_{R^{o}}$ depending on $H_{j}$ such that
\begin{equation}\label{E-3.5}
|K_{R^{o}}|\frac{|C(((f, H_{j}), j\in R^{o}))|}{|C(f_{0}, \ldots, f_{n})|}=1.
\end{equation}

For the above $R^{o}, R, S, Q,$ we may rewrite their elements as follows:
$$Q=\{1,2,\ldots, q\}:=\{t_{0}, t_{1}\ldots, t_{q}\},$$
$$R^{o}=\{t_{0}, t_{1}, \ldots, t_{n}\},\,\, R=\{t_{0}, t_{1}, \ldots, t_{n}, t_{n+1}, \ldots, t_{N}\}.$$ Denote $\overline{g}^{[k]}_{j}:=(\overline{f}^{[k]}, H_{j}),$ $j\in Q.$ Then it follows from \eqref{E-3.1} and \eqref{E-3.2} that for any $z\in G:=\mathbb{C}^{m}\setminus\left(\{z\in \mathbb{C}^{m}:\left(\prod_{t_{j}\in R}|\overline{g}^{[j]}_{t_{j}}|^{\omega(t_{j})}\right)\cdot\left(\prod_{j\in S}|(f, H_{j})|^{\omega(j)}\right)=0\}\cup I(f)\right),$
\begin{eqnarray*}
&&\prod_{j\in S}\left(\frac{1}{|K_{j}|^{2}}\right)^{\omega(j)}\leq\prod_{j\in S}\left(\frac{(f, H_{j})(z)}{\|f(z)\||K_{j}|}\right)^{\omega(j)}\\
&\leq&\prod_{t_j\in R}\left(\frac{\|f(z)\||K_{t_{j}}|}{|\overline{g}^{[j]}_{t_{j}}|}\right)^{\omega(t_{j})}\cdot\frac{\left(\prod_{t_{j}\in R}|\overline{g}^{[j]}_{t_{j}}|^{\omega(t_{j})}\right)\cdot\left(\prod_{j\in S}|(f, H_{j})|^{\omega(j)}\right)}{(\|f(z)\||\min\{K_{1},\ldots, K_{q}\}|)^{\sum_{j\in Q}\omega(j)}}.\end{eqnarray*}
Then together with \eqref{E-3.4},  the above inequality becomes
\begin{eqnarray*}
&&\prod_{j\in S}\left(\frac{1}{|K_{j}|^{2}}\right)^{\omega(j)}\\
&\leq&\prod_{t_{j}\in R^{o}}\frac{\|f(z)\||K_{t_{j}}|}{|\overline{g}^{[j]}_{t_{j}}|}\cdot\frac{\left(\prod_{t_{j}\in R}|\overline{g}^{[j]}_{t_{j}}|^{\omega(t_{j})}\right)\cdot\left(\prod_{j\in S}|(f, H_{j})|^{\omega(j)}\right)}{(\|f(z)\||\min\{K_{1}, \ldots, K_{q}\}|)^{\sum_{j\in Q}\omega(j)}}\end{eqnarray*}for any $z\in G.$
Then by \eqref{E-3.3}, we get from the above inequality that for any $z\in G,$
\begin{eqnarray*}
&&\prod_{j\in S}\left(\frac{1}{|K_{j}|^{2}}\right)^{\omega(j)}\\
&\leq&\prod_{t_{j}\in R^{o}}\frac{\|f(z)\||K_{t_{j}}|}{|\overline{g}^{[j]}_{t_{j}}|}\cdot\frac{\left(\prod_{t_{j}\in R}|\overline{g}^{[j]}_{t_{j}}|^{\omega(t_{j})}\right)\cdot\left(\prod_{j\in S}|(f, H_{j})|^{\omega(j)}\right)}{(\|f(z)\||\min\{K_{1}, \ldots, K_{q}\}|)^{\tilde{\omega}(q-2N+n-1)+n+1}}\\
&=&\frac{\prod_{t_{j}\in R^{o}}|K_{t_{j}}|}{|g_{t_{0}}\overline{g}_{t_{1}}\cdots \overline{g}^{[n]}_{t_{n}}|}\cdot\frac{\left(\prod_{t_{j}\in R}|\overline{g}^{[j]}_{t_{j}}|^{\omega(t_{j})}\right)\cdot\left(\prod_{j\in S}|(f, H_{j})|^{\omega(j)}\right)}{(\|f(z)\||\min\{K_{1}, \ldots, K_{q}\}|)^{\tilde{\omega}(q-2N+n-1)}}.\end{eqnarray*}
By \eqref{E-3.5}, the last line in the above inequalities is equal to
\begin{eqnarray*}
&&\frac{|K_{R^{o}}|\prod_{t_{j}\in R^{o}}|K_{t_{j}}|}{|\min\{K_{1}, \ldots, K_{q}\}|^{\tilde{\omega}(q-2N+n-1)}}\cdot\frac{1}{\|f(z)\|^{\tilde{\omega}(q-2N+n-1)}}\\&&\cdot\frac{\left(\prod_{t_{j}\in R}|\overline{g}^{[j]}_{t_{j}}|^{\omega(t_{j})}\right)\cdot\left(\prod_{j\in S}|(f, H_{j})|^{\omega(j)}\right)}{|C(f_{0}, \ldots, f_{n})|}\cdot\frac{|C(((f, H_{j}), j\in R^{o}))|}{|g_{t_{0}}\overline{g}_{t_{1}}\cdots \overline{g}^{[n]}_{t_{n}}|}.
\end{eqnarray*}
So, we get from the above discussion that for any $z\in G,$
\begin{eqnarray*}
&&\|f(z)\|^{\tilde{\omega}(q-2N+n-1)}\\&\leq& \frac{|K_{R^{o}}|\prod_{t_{j}\in R^{o}}|K_{t_{j}}|\prod_{j\in S}|K_{j}|^{2\omega(j)}}{|\min\{K_{1}, \ldots, K_{q}\}|^{\tilde{\omega}(q-2N+n-1)}}\\&&\cdot\frac{\left(\prod_{t_{j}\in R}|\overline{g}^{[j]}_{t_{j}}|^{\omega(t_{j})}\right)\cdot\left(\prod_{j\in S}|(f, H_{j})|^{\omega(j)}\right)}{|C(f_{0}, \ldots, f_{n})|}\cdot\frac{|C(((f, H_{j}), j\in R^{o}))|}{|g_{t_{0}}\overline{g}_{t_{1}}\cdots \overline{g}^{[n]}_{t_{n}}|}
\end{eqnarray*}
Therefore, the inequality in the assertion of this lemma is obtained immediately by setting $$K=\frac{|K_{R^{o}}|\prod_{t_{j}\in R^{o}}|K_{t_{j}}|\prod_{j\in S}|K_{j}|^{2\omega(j)}}{|\min\{K_{1}, \ldots, K_{q}\}|^{\tilde{\omega}(q-2N+n-1)}}$$ which is a positive constant depending on $\{H_{j}\}_{j\in Q}.$
\end{proof}

The following result is a difference analogue of the lemma on the logarithmic derivative in several complex variables. It generalizes the one dimensional results \cite[Theorem 5.1]{halburdkt:14}, \cite[Theorem 2.1]{halburdk:06AASFM} and the high dimensional result \cite[Theorem 3.1]{korhonen:12}. In \cite{korhonen:12} a difference analogue of the lemma on the logarithmic derivatives was obtained for meromorphic functions in several variables of hyperorder strictly less that $2/3$. The following lemma extends this result for the case hyperorder $<1$. \par

\begin{lemma}\label{L-3.6}
Let $f$ be a nonconstant meromorphic function on $\mathbb{C}^{m}$ such that $f(0)\neq 0,\infty,$ and let $\varepsilon>0.$ If $\zeta_{2}(f):=\zeta<1,$ then
\begin{eqnarray*}
m\left(r, \frac{f(z+c)}{f(z)}\right)=\int_{S_{m}(r)}\log^{+}\left|\frac{f(z+c)}{f(z)}\right|\eta_{m}(z)=o\left(\frac{T_{f}(r)}{r^{1-\zeta-\varepsilon}}\right)
\end{eqnarray*} for all $r>0$ outside of a possible exceptional set $E\subset[1, \infty)$ of finite logarithmic measure $\int_{E}\frac{dt}{t}<\infty.$
\end{lemma}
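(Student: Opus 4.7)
The plan is to reduce the statement to the one-variable difference analogue of the lemma on the logarithmic derivative for hyperorder strictly less than one, namely \cite[Theorem~5.1]{halburdkt:14}. The earlier $m$-variable result \cite[Theorem~3.1]{korhonen:12} was obtained by averaging the weaker one-variable \cite[Theorem~2.1]{halburdk:06AASFM} over slices, which limited the admissible hyperorder to $\zeta<2/3$. The idea here is to run essentially the same averaging argument but with \cite[Theorem~5.1]{halburdkt:14} as the one-variable input, so the improvement to $\zeta<1$ is inherited directly from the one-variable case.

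First I would apply a unitary change of coordinates on $\mathbb{C}^{m}$, which leaves $\|z\|$, the spheres $S_{m}(r)$, the forms $\eta_{m}$ and $\sigma_{m}$, and the characteristic $T_{f}(r)$ invariant, in order to reduce to the case $c=(\|c\|,0,\ldots,0)$. Writing $z=(z_{1},\tilde{z})$ with $\tilde{z}\in\mathbb{C}^{m-1}$ and setting $F_{\tilde{z}}(w):=f(w,\tilde{z})$, the difference quotient becomes
\[
\frac{f(z+c)}{f(z)}=\frac{F_{\tilde{z}}(z_{1}+\|c\|)}{F_{\tilde{z}}(z_{1})}.
\]
For each $\tilde{z}$ such that $F_{\tilde{z}}\not\equiv 0,\infty$, the one-variable result yields a slicewise estimate $m(\rho,F_{\tilde{z}}(\cdot+\|c\|)/F_{\tilde{z}})=o(T_{F_{\tilde{z}}}(\rho)/\rho^{1-\zeta_{2}(F_{\tilde{z}})-\varepsilon})$ outside an exceptional set $E_{\tilde{z}}\subset[1,\infty)$ of finite logarithmic measure.

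Next I would disintegrate the sphere integral defining $m(r,f(z+c)/f(z))$ into its slices over $\tilde{z}$, which are circles of radius $\sqrt{r^{2}-\|\tilde{z}\|^{2}}$ in the $z_{1}$-plane, apply the slicewise estimate fiberwise, and integrate in $\tilde{z}$. A Stoll-type slicing inequality then converts the slice characteristics $T_{F_{\tilde{z}}}$ into the global $T_{f}$, and in particular forces $\zeta_{2}(F_{\tilde{z}})\leq\zeta$ for almost every $\tilde{z}$. The comparison between the Noguchi--Ochiai normalization of $m(r,\cdot)$ in $\mathbb{C}^{m}$ and the classical one-variable proximity is routine, and can be carried out exactly as in \cite{korhonen:12}; this is the step where the one-variable input is actually invoked.

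The main obstacle is consolidating the family $\{E_{\tilde{z}}\}$ of slicewise exceptional sets into a single exceptional set $E\subset[1,\infty)$ of finite logarithmic measure, while preserving the bound $o(T_{f}(r)/r^{1-\zeta-\varepsilon})$. This should be handled by a Fubini argument applied to the product measure $\tfrac{d\rho}{\rho}\otimes d\mu(\tilde{z})$: the set of pairs $(\rho,\tilde{z})$ where the slicewise bound fails has finite total mass, so its projection to the $\rho$-axis has finite logarithmic measure. A secondary technical point is to verify that the (measure-zero) set of $\tilde{z}$ on which $F_{\tilde{z}}$ degenerates does not contribute to the sphere integral, which follows from $f$ being a well-defined meromorphic function on $\mathbb{C}^{m}$ and standard properties of the indeterminacy set $I(f)$.
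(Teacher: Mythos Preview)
Your approach diverges from the paper's at the crucial step. The paper does \emph{not} apply \cite[Theorem~5.1]{halburdkt:14} slicewise; instead it uses the quantitative \cite[Lemma~8.2]{halburdkt:14}, which for every $r>0$ (no exceptional set) gives
\[
m\!\left(r,\frac{g(u+\tilde c)}{g(u)}\right)\le \frac{K(\alpha,\delta,\tilde c)}{r^{\delta}}\Bigl(T_{g}\bigl(\alpha(r+|\tilde c|)\bigr)+\log^{+}\tfrac{1}{|g(0)|}\Bigr),
\]
applied to the radial restrictions $f^{\xi}(u)=f(\xi u)$, $\xi\in S_{m}(1)$. Integrating over $\xi$ and using Stoll's identity $T_{f}(r)=\int_{S_{m}(1)}T_{f^{\xi}}(r)\,\eta_{m}$ yields $m(r,f(z+c)/f(z))\le K r^{-\delta}T_{f}(\alpha(r+|\tilde c|))+O(1)$ for \emph{all} $r$. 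Only then are Hinkkanen's lemma and \cite[Lemma~8.3]{halburdkt:14} invoked, once and globally, producing a single exceptional set.

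Your Fubini consolidation has a genuine gap. Applying \cite[Theorem~5.1]{halburdkt:14} on each slice gives exceptional sets $E_{\tilde z}$ of finite logarithmic measure, but with no uniform bound on $\int_{E_{\tilde z}}d\rho/\rho$ and no uniformity in the implied $o(\cdot)$-rates, so there is no reason the bad set of pairs $(\rho,\tilde z)$ has finite $\tfrac{d\rho}{\rho}\otimes d\mu$-mass. Even if it did, a finite-mass set can project onto all of $[1,\infty)$ (take fibres of $\mu$-measure $\sim 1/\rho$), so the projection need not have finite logarithmic measure. The fix is precisely the paper's order of operations: use the pointwise-in-$r$ estimate \cite[Lemma~8.2]{halburdkt:14}, integrate over slices first, then introduce the exceptional set once via the growth lemmas. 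A secondary difficulty is that the Stoll identities you appeal to are for radial slices $f^{\xi}(u)=f(\xi u)$, not for parallel slices $F_{\tilde z}(w)=f(w,\tilde z)$; with parallel slicing the circle radii $\sqrt{r^{2}-\|\tilde z\|^{2}}$ vary and there is no clean averaging formula relating $T_{F_{\tilde z}}$ to $T_{f}$.
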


\begin{proof}
Let $E_{1}$ be the set of all points $\xi\in S_{m}(1)$ such that $\{z=u\xi: |u|<+\infty\}\subset I(f)$ which is of measure zero in $S_{m}(1).$ For any $\xi\in S_{m}(1)\setminus E_{1},$ considering the meromorphic function $f^{\xi}(u):=f(\xi u)$ of $\mathbb{C}^{1},$ we have
 \begin{eqnarray*} T_{f^{\xi}}(r)=\frac{1}{2\pi}\int_{0}^{2\pi}\log |f^{\xi}(re^{i\theta})|d\theta-\log|f(0)|,\end{eqnarray*} and thus by \cite[Lemmas~1.1--1.2]{stoll:64} it follows (see also \cite[pages 33--34]{fujimoto:74Nagoya})
\begin{equation}\label{Eqq-1}T_{f}(r)=\int_{S_{m}(1)}T_{f^{\xi}}(r)\eta_{m}(z).\end{equation}\par

Recall that the proximity function of a meromorphic function $\phi$ on $\mathbb{C}^{m}$ is defined (\cite[Definition 5.5]{fujimoto:74Nagoya}) by
\begin{equation*}
m(r, \phi):=\int_{S_{m}(r)}\log^{+}\left|\phi\right|\eta_{m}(z).
\end{equation*}
Let $E_{2}$ be the set of all points $\xi\in S_{m}(1)$ such that $\{u\xi: |u|<+\infty\}\subset I(\phi)$ which is of measure zero in $S_{m}(1).$ For any $\xi\in S_{m}(1)\setminus E_{2},$ considering the meromorphic function $\phi^{\xi}(u):=\phi(\xi u)$ of $\mathbb{C}^{1},$ we have
$$m(r, \phi^{\xi})=\frac{1}{2\pi}\int_{0}^{2\pi}\log^{+}|\phi^{\xi}(re^{i\theta})|d\theta.$$ Then by \cite[Lemmas~1.1--1.2]{stoll:64}, we also get
\begin{eqnarray}\label{Eqq-2}m(r,\phi)=\int_{S_{m}(1)}m(r,¡¡\phi^{\xi})\eta_{m}(z).\end{eqnarray} \par

Now let the constant $c:=\tilde{c}\xi,$ where $\tilde{c}\in\mathbb{C}^{1}\setminus\{0\}.$ For any $\xi\in S_{m}(1)\setminus E_{1},$ considering the meromorphic function $f^{\xi}(u):=f(\xi u)$ of $\mathbb{C}^{1},$ we get from \eqref{Eqq-2} that
\begin{eqnarray*} m\left(r, \frac{f(z+c)}{f(z)}\right)&=&
  \int_{S_{m}(r)}\log^{+}\left|\frac{f(z+c)}{f(z)}\right|\eta_{m}(z)
  \\&=&\int_{S_{m}(1)}\left(\frac{1}{2\pi}\int_{0}^{2\pi}\log^{+}\left|\frac{f^{\xi}(re^{i\theta}+\tilde{c})}{f^{\xi}(re^{i\theta})}\right|d\theta\right)\eta_{m}(z),\end{eqnarray*}
where we denote $z=u\xi$ for any $\xi\in S_{m}(1).$ By \cite[Lemma 8.2]{halburdkt:14}, we get that for all $r>0,$ $\delta\in(0,1)$ and $\alpha>1,$ \begin{eqnarray*}
m\left(r,\frac{f^{\xi}(re^{i\theta}+\tilde{c})}{f^{\xi}(re^{i\theta})}\right)&=&\frac{1}{2\pi}\int_{0}^{2\pi}\log^{+}\left|\frac{f^{\xi}(re^{i\theta}+\tilde{c})}{f^{\xi}(re^{i\theta})}\right|d\theta\\&\leq& \frac{K(\alpha, \delta, \tilde{c})}{r^{\delta}}\left(T_{f^{\xi}}(\alpha (r+|\tilde{c}|))+\log^{+}\frac{1}{|f^{\xi}(0)|}\right),\end{eqnarray*}
where $K(\alpha, \delta, \tilde{c})=\frac{4|\tilde{c}|^{\delta}(4\alpha+\alpha \delta+\delta)}{\delta(1-\delta)(\alpha-1)},$ $r=|u|=\|u\xi\|=\|z\|.$ Therefore, together with \eqref{Eqq-1}, it follows from the two inequalities above that
\begin{eqnarray*} m\left(r, \frac{f(z+c)}{f(z)}\right)&=&
  \int_{S_{m}(r)}\log^{+}\left|\frac{f(z+c)}{f(z)}\right|\eta_{m}(z)
  \\&\leq&\int_{S_{m}(1)}\left(\frac{K(\alpha, \delta, \tilde{c})}{r^{\delta}}\left(T_{f^{\xi}}(\alpha (r+|\tilde{c}|))+\log^{+}\frac{1}{|f^{\xi}(0)|}\right)\right)\eta_{m}(z)\\
  &=&
\frac{K(\alpha, \delta, \tilde{c})}{r^{\delta}} \int_{S_{m}(1)}T_{f^{\xi}}(\alpha (r+|\tilde{c}|))\eta_{m}(z)+O(1),\end{eqnarray*}
namely, \begin{eqnarray}\label{Eqq-3} m\left(r, \frac{f(z+c)}{f(z)}\right)\leq\frac{K(\alpha, \delta, \tilde{c})}{r^{\delta}}T_{f}(\alpha (r+|\tilde{c}|))+O(1).\end{eqnarray}

The following part of the proof is dealt with similarly as in \cite[Theorem 5.1]{halburdkt:14}.  Choose $p(r):=r,$ $h(x):=(\log x)^{1+\frac{\varepsilon}{3}}$ and $$\alpha:=1+\frac{p(r+|\widetilde{c}|)}{(r+|\widetilde{c}|)h(T_{f}(r+|\widetilde{c}|))},$$ and thus $$\rho=\alpha(r+|\widetilde{c}|)=r+|\widetilde{c}|+\frac{r+|\widetilde{c}|}{(\log T_{f}(r+|\widetilde{c}|))^{1+\frac{\varepsilon}{3}}}.$$ By \cite[Lemma 4]{hinkkanen:92} we have
\begin{eqnarray}\label{Eqq-4}T_{f}(\rho)=T_{f}\left(s+\frac{p(s)}{h(T_{f}(s))}\right)\leq K T_{f}(s)\quad(s=r+|\widetilde{c}|)\end{eqnarray} for all $s$ outside of a set $E$ satisfying \begin{equation*}\int_{E\cap[s_{0}, R]}\frac{ds}{p(s)}\leq\frac{1}{\log K}\int_{e}^{T_{f}(R)}\frac{dx}{xh(x)}+O(1)<\infty\end{equation*} where $R<+\infty$ and $K$ is a positive real constant. Since $\varsigma=\varsigma_{2}(f)<1,$ by \cite[Lemma 8.3]{halburdkt:14} we have \begin{eqnarray}\label{Eqq-5} T_{f}(r+|\widetilde{c}|)=T_{f}(r)+o\left(\frac{T_{f}(r)}{r^{1-\varsigma-\varepsilon}}\right)\end{eqnarray}
for  all $r>0$ outside of a possible exceptional set $F\subset[1, \infty)$ of finite logarithmic measure $\int_{F}\frac{dt}{t}<\infty.$
We can choose suitable $\delta\in(0,1)$ such that  $$\frac{T_{f}(s)}{r^{\delta}}=o\left(\frac{T_{f}(r+|\widetilde{c}|)}{r^{1-\varsigma-\varepsilon}}\right)$$  for all $r\not\in F\cup E.$ Hence it follows from \eqref{Eqq-3}, \eqref{Eqq-4} and \eqref{Eqq-5} that
\begin{eqnarray*}m\left(r, \frac{f(z+c)}{f(z)}\right)=o\left(\frac{T_{f}(r)}{r^{1-\varsigma-\varepsilon}}\right)\end{eqnarray*} for  all $r>0$ outside of a possible exceptional set, still say $E\subset[1, \infty),$ of finite logarithmic measure $\int_{E}\frac{dt}{t}<\infty.$
\end{proof}

 Since $$\frac{f(z)}{f(z+c)}=\frac{f[(z+c)-c]}{f(z+c)},\quad \frac{\overline{f}^{[k]}}{f(z)}=\frac{\overline{f}^{[k]}}{\overline{f}^{[k-1]}}\cdot\frac{\overline{f}^{[k-1]}}{\overline{f}^{[k-2]}}\cdots\frac{\overline{f}}{f(z)}\, \,(k\in\mathbb{N}),$$ it follows immediately from Lemma \ref{L-3.6} that
\begin{eqnarray*}
\int_{S_{m}(r)}\log^{+}\left|\frac{f(z)}{f(z+c)}\right|\eta_{m}(z)&=&o\left(\frac{T_{f}(r)}{r^{1-\zeta-\varepsilon}}\right),\\
 \int_{S_{m}(r)}\log^{+}\left|\frac{\overline{f}^{[k]}}{f(z)}\right|\eta_{m}(z)&=&o\left(\frac{T_{f}(r)}{r^{1-\zeta-\varepsilon}}\right)
\end{eqnarray*} for all $r>0$ outside of a possible exceptional set $E\subset[1, \infty)$ of finite logarithmic measure $\int_{E}\frac{dt}{t}<\infty.$

\begin{proof}[Proof of Theorem \ref{T-3.1}]
 Assume that $q>2N-n+1.$ Let $Q=\{1,,2\ldots, q\}.$ By Lemma \ref{L-3.5}, for $r>1$ we have
\begin{eqnarray*}
  &&\tilde{\omega}(q-2N+n-1)\log\|f\|\\&\leq& \sum_{t_{j}\in R}\omega(t_{j})\log|(\overline{f}^{[j]}, H_{t_{j}})|+
  \sum_{j\in S}\omega(j)\log|(f, H_{j})|-\log|C(f_{0}, f_{1}, \ldots, f_{n})|\\&&
   +\log\frac{C(((f, H_{j}), j\in R^{o}))}{|(f, H_{t_{0}})(\overline{f}, H_{t_{1}})\cdots (\overline{f}^{[n]}, H_{t_{n}})|}+O(1)
\end{eqnarray*} for some subsets $R^{o}, R, S$ of $Q$ such that
$$R^{o}=\{t_{0}, t_{1}, \ldots, t_{n}\}\subset R=\{t_{0}, t_{1}, \ldots, t_{n}, t_{n+1}, \ldots, t_{N}\}\subset Q\setminus S.$$
Integrating both sides of this inequality, we have
\begin{eqnarray*}
 && \tilde{\omega}(q-2N+n-1)\int_{S_{m}(r)}\log\|f\|\sigma_{m}(z)\\
 &\leq& \sum_{t_{j}\in R}\omega(t_{j})\int_{S_{m}(r)}\log|(\overline{f}^{[j]}, H_{t_{j}})|\eta_{m}(z)
 +
 \sum_{j\in S}\omega(j)\int_{S_{m}(r)}\log|(f, H_{j})|\eta_{m}(z)\\&&
 -\int_{S_{m}(r)}\log|C(f_{0}, f_{1}, \ldots, f_{n})|\eta_{m}(z)\\&&
   +\int_{S_{m}(r)}\log^{+}\frac{C(((f, H_{j}), j\in R^{o}))}{|(f, H_{t_{0}})(\overline{f}, H_{t_{1}})\cdots (\overline{f}^{[n]}, H_{t_{n}})|}\eta_{m}(z)+O(1).
\end{eqnarray*}
By the definition of the characteristic function of $f$ and together with the Jensen's theorem,
\begin{eqnarray*}
&& \tilde{\omega}(q-2N+n-1)T_{f}(r)\\
 &\leq& \sum_{t_{j}\in R}\omega(t_{j})N(r, \nu_{(\overline{f}^{[j]}, H_{t_{j}})}^{0})+\sum_{j\in S}\omega(j)N(r, \nu_{(f, H_{j})}^{0})-N(r, \nu_{C(f_{0}, f_{1}, \ldots, f_{n})}^{0})\\
  && +\int_{S_{m}(r)}\log^{+}\frac{C(((f, H_{j}), j\in R^{o}))}{|(f, H_{t_{0}})(\overline{f}, H_{t_{1}})\cdots (\overline{f}^{[n]}, H_{t_{n}})|}\eta_{m}(z)+O(1)\\
   &\leq& \sum_{t_{j}\in R}\omega(t_{j})N(r+j|c|, \nu_{(f, H_{t_{j}})}^{0})+\sum_{j\in S}\omega(j)N(r, \nu_{(f, H_{j})}^{0})-N(r, \nu_{C(f_{0}, f_{1}, \ldots, f_{n})}^{0})\\
  && +\int_{S_{m}(r)}\log^{+}\frac{C(((f, H_{j}), j\in R^{o}))}{|(f, H_{t_{0}})(\overline{f}, H_{t_{1}})\cdots (\overline{f}^{[n]}, H_{t_{n}})|}\eta_{m}(z)+O(1).\end{eqnarray*}
By the Jensen's theorem and the definition of characteristic function, we have
\begin{eqnarray*}
  N(r, \nu_{(f, H_{t_j})}^{0})&=&\int_{S_{m}(r)}\log|(f, H_{t_j})|\eta_{m}(z)+O(1)\\
  &\leq& \int_{S_{m}(r)}\log\|f\|\eta_{m}(z)+O(1)
  =T_{f}(r)+O(1).
\end{eqnarray*} Thus the hyperorder of $N(r, \nu_{(f, H_{j})}^{0})$ satisfies
 \begin{eqnarray*}\lambda_{t_j}:=\limsup_{r\rightarrow\infty} \frac{\log\log N(r, \nu_{(f, H_{t_{j}})}^{0})}{\log r}\leq \zeta_{2}(f):=\zeta<1.
\end{eqnarray*} Then by \cite[Lemma 8.3]{halburdkt:14} we obtain
 \begin{eqnarray*}
 N(r+j|c|, \nu_{(f, H_{t_{j}})}^{0})&\leq& N(r, \nu_{(f, H_{t_j})}^{0})+o\left(\frac{ N(r, \nu_{(f, H_{t_j})}^{0})}{r^{1-\lambda_{t_j}-\varepsilon}}\right)
 \\&\leq& N(r, \nu_{(f, H_{j})}^{0})+o\left(\frac{T_{f}(r)}{r^{1-\zeta-\varepsilon}}\right).\end{eqnarray*}
So, it follows that
  \begin{eqnarray*}
  && \tilde{\omega}(q-2N+n-1)T_{f}(r)\\
  &\leq&\sum_{j\in R}\omega(j)N(r, \nu_{(f, H_{j})}^{0})+\sum_{j\in S}\omega(j)N(r, \nu_{(f, H_{j})}^{0})-N(r, \nu_{C(f_{0}, f_{1}, \ldots, f_{n})}^{0})\\
  && +\int_{S_{m}(r)}\log^{+}\frac{C(((f, H_{j}), j\in R^{o}))}{|(f, H_{t_{0}})(\overline{f}, H_{t_{1}})\cdots (\overline{f}^{[n]}, H_{t_{n}})|}\eta_{m}(z)+o\left(\frac{T_{f}(r)}{r^{1-\zeta-\varepsilon}}\right)\\
   &\leq& \sum_{j\in Q}\omega(j)N(r, \nu_{(f, H_{j})}^{0})-N(r, \nu_{C(f_{0}, f_{1}, \ldots, f_{n})}^{0})\\
  && +\int_{S_{m}(r)}\log^{+}\frac{C(((f, H_{j}), j\in R^{o}))}{|(f, H_{t_{0}})(\overline{f}, H_{t_{1}})\cdots (\overline{f}^{[n]}, H_{t_{n}})|}\eta_{m}(z)+o\left(\frac{T_{f}(r)}{r^{1-\zeta-\varepsilon}}\right).
\end{eqnarray*}
Denote $\overline{g}^{[j]}_{t_{j}}:=(\overline{f}^{[j]}, H_{t_{j}}),$ $t_{j}\in R^{o}.$ We have
\begin{eqnarray*}
\frac{C(((f, H_{j}), j\in R^{o}))}{|(f, H_{t_{0}})(\overline{f}, H_{t_{1}})\cdots (\overline{f}^{[n]}, H_{t_{n}})|}
&=&\frac{\left|\begin{array}{cccc}
                                 g_{t_{0}} & g_{t_{1}} & \cdots & g_{t_{n}} \\
                                 \overline{g}_{t_{0}} & \overline{g}_{t_{1}} & \cdots & \overline{g}_{t_{n}} \\
                                 \vdots & \vdots & \ddots & \vdots \\
                                 \overline{g}_{t_{0}}^{[n]} & \overline{g}_{t_{1}}^{[n]} & \cdots & \overline{g}_{t_{n}}^{[n]}
                             \end{array}\right|}{|g_{t_{0}}\overline{g}_{t_{1}}\cdots \overline{g}^{[n]}_{t_{n}}|}\\
                             &=&\frac{\left|\begin{array}{cccc}
                                 1 & \frac{g_{t_{1}}}{g_{t_{0}}} & \cdots & \frac{g_{t_{n}}}{g_{t_{0}}} \\
                                1 & \frac{\overline{g}_{t_{1}}}{\overline{g}_{t_{0}}} & \cdots & \frac{\overline{g}_{t_{n}}}{\overline{g}_{t_{0}}} \\
                                 \vdots & \vdots & \ddots & \vdots \\
                                 1 & \frac{\overline{g}_{t_{1}}^{[n]}}{\overline{g}_{t_{0}}^{[n]}} & \cdots & \frac{\overline{g}_{t_{n}}^{[n]}}{\overline{g}_{t_{0}}^{[n]}}
                             \end{array}\right|}{|\frac{\overline{g}_{t_{1}}}{{\overline{g}_{t_{0}}}}\cdots \frac{\overline{g}^{[n]}_{t_{n}}}{\overline{g}^{n}_{t_{0}}}|}\\
                             &=&\frac{\left|\begin{array}{cccc}
                                 1& 1& \cdots & 1 \\
                                1 & \left(\frac{\overline{g}_{t_{1}}}{\overline{g}_{t_{0}}}\right)/\left(\frac{g_{t_{1}}}{g_{t_{0}}}\right) & \cdots & \left(\frac{\overline{g}_{t_{n}}}{\overline{g}_{t_{0}}}\right)/\left(\frac{g_{t_{n}}}{g_{t_{0}}}\right)\\
                                 \vdots & \vdots & \ddots & \vdots \\
                                1 & \left(\frac{\overline{g}^{[n]}_{t_{1}}}{\overline{g}^{[n]}_{t_{0}}}\right)/\left(\frac{g_{t_{1}}}{g_{t_{0}}}\right) & \cdots & \left(\frac{\overline{g}^{[n]}_{t_{n}}}{\overline{g}^{[n]}_{t_{0}}}\right)/\left(\frac{g_{t_{n}}}{g_{t_{0}}}\right)
                             \end{array}\right| }{
                             \left|\left[\left(\frac{\overline{g}_{t_{1}}}{\overline{g}_{t_{0}}}\right)/\left(\frac{g_{t_{1}}}{g_{t_{0}}}\right)\right]\cdots \left[ \left(\left(\frac{\overline{g}^{[n]}_{t_{n}}}{\overline{g}^{[n]}_{t_{0}}}\right)/\frac{g_{t_{n}}}{g_{t_{0}}}\right)\right]\right|} .\\
\end{eqnarray*}
By the definition of the characteristic function, one can deduce (or by \cite{fujimoto:98,ru:01}), for $i\neq j$
\begin{equation*} T_{\frac{(f,H_{i})}{(f, H_{j})}}(r)\leq T_{f}(r)+O(1),\end{equation*}
and thus $\zeta_{2}(\frac{(f,H_{i})}{(f, H_{j})})\leq \zeta_{2}(f):=\zeta<1.$ Hence by
Lemma \ref{L-3.6}  we have

\begin{eqnarray*}
&&\int_{S_{m}(r)}\log^{+}\frac{C(((f, H_{j}), j\in R^{o}))}{|(f, H_{t_{0}})(\overline{f}, H_{t_{1}})\cdots (\overline{f}^{[n]}, H_{t_{n}})|}\eta_{m}(z)\\
&&=\sum_{j=1}^{n}o\left(\frac{T_{\frac{g_{t_{j}}}{g_{t_{0}}}}(r)}{r^{1-\zeta_{2}(\frac{g_{t_{j}}}{g_{t_{0}}})-\varepsilon}}\right)\\
&&= o\left(\frac{T_{f}(r)}{r^{1-\zeta-\varepsilon}}\right)\end{eqnarray*}for all $r>0$ outside of a possible exceptional set $E\subset[1, \infty)$ of finite logarithmic measure $\int_{E}\frac{dt}{t}<\infty.$\par

Therefore, the above inequalities implies that
\begin{eqnarray*}
&&(q-2N+n-1)T_{f}(r)\\
&& \leq \sum_{j\in Q}\frac{\omega(j)}{\tilde{\omega}}N(r, \nu_{(f, H_{j})}^{0})-\frac{1}{\tilde{\omega}}N(r, \nu_{C(f_{0}, f_{1}, \ldots, f_{n})}^{0})+o\left(\frac{T_{f}(r)}{r^{1-\zeta-\varepsilon}}\right)
 \end{eqnarray*}
 for all $r>0$ outside of a possible exceptional set $E\subset[1, \infty)$ of finite logarithmic measure $\int_{E}\frac{dt}{t}<\infty.$

From Lemma \ref{L-3.1}, $\tilde{\omega}=\max_{j\in Q}\{\omega(j)\}\leq \frac{n}{N}.$ Then it follows that
 \begin{eqnarray*}
&&\| (q-2N+n-1)T_{f}(r)\\
 &&\leq\sum_{j\in Q}N(r, \nu_{(f, H_{j})}^{0})-\frac{N}{n}N(r, \nu_{C(f_{0}, f_{1}, \ldots, f_{n})}^{0})+o\left(\frac{T_{f}(r)}{r^{1-\zeta-\varepsilon}}\right).
 \end{eqnarray*}Thus Theorem \ref{T-3.1} is proved.\end{proof}

\section{Defect relation}\label{defectsec}
The defects $\delta(f, H)$ and $\delta_{W(f)}$ of a meromorphic mapping $f:\mathbb{C}^{m}\rightarrow\mathbb{P}^{n}(\mathbb{C})$ for a  hyperplane $H$ in $\mathbb{P}^{n}(\mathbb{C})$ are defined by

\begin{eqnarray*}
\delta(f, H)&=&1-\limsup_{r\rightarrow\infty}\frac{N(r, \nu_{(f, H)}^{0})}{T_{f}(r)},\\
\delta_{W(f)}&=&\liminf_{r\rightarrow\infty}\frac{N(r, \nu_{W(f_{0}, \ldots, f_{n})}^{0})}{T_{f}(r)}.
\end{eqnarray*}
From the Chen's version of the second main theorem (Theorem \ref{T-1.0}), there exists a defect relation such that
$$\frac{N+1}{n+1}\delta_{W(f)}+\sum_{j=1}^{q}\delta(f, H)\leq 2N-n+1$$ for $q$ hyperplanes $\{H_{j}\}_{j=1}^{q}.$\par

Similary, the difference defect $\delta_{C(f)}$ of a meromorphic mapping $f:\mathbb{C}^{m}\rightarrow\mathbb{P}^{n}(\mathbb{C})$ with reduced representation $[f_{0}, \ldots, f_{n}]$ is defined by
\begin{eqnarray*}
\delta_{C(f)}=\liminf_{r\rightarrow\infty}\frac{N(r, \nu_{C(f_{0}, \ldots, f_{n})}^{0})}{T_{f}(r)}.
\end{eqnarray*}
Hence, by Theorem \ref{T-3.1} we obtain a defect relation as follows, which is an extension of \cite[Corollary 3.4]{wonglw:09}.\par

\begin{theorem}Under the conditions of Theorem~\ref{T-3.1} we have the defect relation
$$\frac{N}{n}\delta_{C(f)}+\sum_{j=1}^{q}\delta(f, H)\leq 2N-n+1.$$
\end{theorem}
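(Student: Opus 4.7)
The plan is to derive the defect relation as a direct algebraic consequence of the second main theorem (Theorem~\ref{T-3.1}), with care taken about the exceptional set and the asymmetric nature of $\liminf$ versus $\limsup$ in the definitions of the defects.

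First I would rearrange the inequality of Theorem~\ref{T-3.1}. Moving the Casorati term to the left and writing $\sum_{j=1}^q N(r,\nu_{(f,H_j)}^0) = qT_f(r) - \sum_{j=1}^q\bigl(T_f(r)-N(r,\nu_{(f,H_j)}^0)\bigr)$, the bound becomes
\begin{equation*}
\frac{N}{n}N(r,\nu_{C(f)}^0) + \sum_{j=1}^q\bigl(T_f(r)-N(r,\nu_{(f,H_j)}^0)\bigr) \leq (2N-n+1)T_f(r) + o\!\left(\frac{T_f(r)}{r^{1-\zeta-\varepsilon}}\right)
\end{equation*}
for all $r$ outside a set $E\subset[1,\infty)$ of finite logarithmic measure. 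Dividing through by $T_f(r)$ gives
\begin{equation*}
\frac{N}{n}\cdot\frac{N(r,\nu_{C(f)}^0)}{T_f(r)} + \sum_{j=1}^q\!\left(1-\frac{N(r,\nu_{(f,H_j)}^0)}{T_f(r)}\right) \leq 2N-n+1 + o\!\left(\frac{1}{r^{1-\zeta-\varepsilon}}\right).
\end{equation*}

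Next I would take $\liminf_{r\to\infty, r\notin E}$ on both sides. Because $E$ has finite logarithmic measure, its complement is unbounded, so the $\liminf$ along $r\notin E$ is well defined and dominates the unrestricted $\liminf$ in the direction that matters here. For the Casorati term this gives exactly $\delta_{C(f)}$ by definition. For each index $j$, note that $\liminf_{r\to\infty}\bigl(1-N(r,\nu_{(f,H_j)}^0)/T_f(r)\bigr)=1-\limsup_{r\to\infty}N(r,\nu_{(f,H_j)}^0)/T_f(r)=\delta(f,H_j)$, and first main theorem guarantees $0\le N(r,\nu_{(f,H_j)}^0)/T_f(r)\le 1+o(1)$, so each of these liminfs is finite. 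Using the standard superadditivity $\liminf(a+b)\geq \liminf a+\liminf b$ (applied inductively to the $q+1$ nonnegative terms on the left), and observing that the error term on the right vanishes as $r\to\infty$, I arrive at
\begin{equation*}
\frac{N}{n}\,\delta_{C(f)} + \sum_{j=1}^q\delta(f,H_j) \leq 2N-n+1,
\end{equation*}
which is the desired defect relation.

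There is no serious obstacle: the entire argument is the standard "divide by $T_f(r)$ and take liminf" reduction once the second main theorem is in hand. The only subtlety worth flagging carefully is the interplay between the exceptional set $E$ and the two different one-sided limits (a $\limsup$ hidden inside each $\delta(f,H_j)$ and a $\liminf$ in $\delta_{C(f)}$); this is routine because $E$ has finite logarithmic measure and hence its complement accumulates at infinity, so the definitions of both defects can be computed along $r\notin E$ without loss.
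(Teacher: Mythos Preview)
Your proposal is correct and is exactly the approach the paper takes: the paper simply asserts that the defect relation follows from Theorem~\ref{T-3.1} without writing out any details, and you have supplied the standard ``divide by $T_f(r)$ and take $\liminf$'' argument that does this. One minor quibble: the restricted $\liminf$ along $r\notin E$ need not \emph{equal} the defects but is at least as large, which is the direction you need; your final paragraph acknowledges this and the argument is sound.
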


\section{Uniqueness of meromorphic mappings}\label{uniquenesssec}

The uniqueness problem for meromorphic mappings under some conditions on the inverse images of divisors was first investigated by R. Nevanlinna. He \cite{nevanlinna:26} proved that if two nonconstant meromorphic functions $f$ and $g$ on the complex plane $\mathbb{C}^{1}$ have the same inverse images ignoring multiplicities for five distinct values in $\mathbb{P}^{1}(\mathbb{C}),$ then $f\equiv g.$ In 1975, H. Fujimoto \cite{fujimoto:75} generalized Nevanlinna's five-value theorem to the case of higher dimension by showing that if two linearly nondegenerate meromorphic mappings $f, g: \mathbb{C}^{m}\rightarrow\mathbb{P}^{n}(\mathbb{C})$ have the same inverse images counted with multiplicities for $q\geq 3n+2$ hyperplanes in general position in $\mathbb{P}^{n}(\mathbb{C}),$ then $f\equiv g.$ For basic results in the uniqueness theory of meromorphic functions and mappings, we refer to two books \cite{yangy:03,huly:03}.\par

By considering the uniqueness problem for $f(z)$ and $f(z+c)$ intersecting hyperplanes in $N$-subgeneral position, we obtain the following uniqueness theorem.\par

\begin{theorem}\label{T-5.3} Let $f$ be a meromorphic mapping  with hyper-order $\varsigma(f)<1$ from $\mathbb{C}^{m}$ into $\mathbb{P}^{n}(C),$ and let $\tau(z)=z+c,$ where $c\in \mathbb{C}^{m}.$ If $\tau((f, H_{j})^{-1})\subset (f,  H_{j})^{-1}$ (counting multiplicity) hold for $n+p$ distinct hyperplanes $\{H_{j}\}_{j=1}^{n+p}$ in $N$-subgeneral position in $\mathbb{P}^{n}(\mathbb{C}),$ and if $p>\frac{N}{N-n+1}+N-n,$ then $f(z)=f(z+c).$
\end{theorem}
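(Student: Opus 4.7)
The plan is to proceed by contradiction and deduce $f(z)=f(z+c)$ from the hypotheses, using Theorem~\ref{T-3.1}. First I would reformulate the hypothesis in divisor language: the condition $\tau((f,H_j)^{-1})\subset (f,H_j)^{-1}$ counting multiplicity is equivalent to $\nu^{0}_{(f,H_j)}(z)\leq \nu^{0}_{(\overline{f},H_j)}(z)$ for every $z\in\mathbb{C}^m$ and every $j\in\{1,\ldots,n+p\}$. Since $\overline{f}$ inherits the same property (simply shift the inclusion by $c$), iteration yields $\nu^{0}_{(f,H_j)}\leq \nu^{0}_{(\overline{f}^{[i]},H_j)}$ for all $i\geq 0$. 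In particular, each quotient
$$\phi_j(z):=\frac{(\overline{f},H_j)(z)}{(f,H_j)(z)}\qquad (1\leq j\leq n+p)$$
is an entire function of hyperorder at most $\varsigma(f)<1$.

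The key reduction is: if $\phi_1=\phi_2=\cdots=\phi_{n+p}=:\phi$, then $\sum_{k=0}^{n}h_{jk}(\overline{f}_k-\phi f_k)=0$ for every $j$. Since the $H_j$'s are in $N$-subgeneral position and $n+p\geq N+1$ (which follows from $p>\tfrac{N}{N-n+1}+N-n\geq N-n+1$), we can pick $N+1$ of these equations whose coefficient matrix has rank $n+1$, forcing $\overline{f}_k=\phi f_k$ for every $k$. This is exactly $f(z)=f(z+c)$ as projective maps, so it suffices to prove that all the $\phi_j$'s coincide.

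For the contradiction, suppose some $\phi_i\not\equiv\phi_j$. The central technical step is a Casorati analogue of the Cartan--Nochka Wronskian divisor bound: using the iterated multiplicity inclusion together with column reduction on the Casorati matrix (each zero of $(f,H_j)$ of multiplicity $\mu$ forces the column combination $\sum_k h_{jk}(f_k,\overline{f}_k,\ldots,\overline{f}_k^{[n]})^T$ to vanish to order $\geq\mu$ at that point), combined with the Nochka weights from Lemmas~\ref{L-3.1}--\ref{L-3.2} applied to the subset of hyperplanes vanishing at each point, one obtains the pointwise divisor estimate
$$\sum_{j=1}^{n+p}\omega(j)\,\nu^{0}_{(f,H_j)}(z)\leq \nu^{0}_{C(f_0,\ldots,f_n)}(z).$$
After reducing (if needed) to the case where $f$ is linearly nondegenerate over $\mathcal{P}_c$ by passing to the minimal $\mathcal{P}_c$-linear subspace containing $f_0,\ldots,f_n$, one applies Theorem~\ref{T-3.1} with $q=n+p$ hyperplanes. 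Combining the divisor estimate above with the identity $\sum_j\omega(j)=\tilde\omega(p-N+n-1)+n+1$ and the bound $\tilde\omega\leq n/N$ from Lemma~\ref{L-3.1}, the inequality collapses to $p\leq \tfrac{N}{N-n+1}+N-n$, contradicting the hypothesis.

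The main obstacle is the Nochka-weighted divisor estimate for $C(f_0,\ldots,f_n)$: one has to track exactly to what order the Casorati determinant vanishes at a common zero of several $(f,H_j)$'s, and for hyperplanes merely in $N$-subgeneral position (rather than general position) the correct weighting requires the combinatorial mechanism behind Lemma~\ref{L-3.2}. Secondary subtleties are the reduction to linear nondegeneracy without losing the count of hyperplanes, and the bookkeeping of the $o(T_f(r)/r^{1-\varsigma-\varepsilon})$ error terms across repeated applications of the shift-invariance of characteristic and counting functions.
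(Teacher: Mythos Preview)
Your divisor estimate $\sum_j\omega(j)\nu^0_{(f,H_j)}\leq\nu^0_{C(f)}$ under forward invariance is correct, and so is the reduction ``all $\phi_j$ equal $\Rightarrow f(z)=f(z+c)$''. But the contradiction step does not work as written. Two concrete problems:

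\textbf{(1) Wrong threshold.} Theorem~\ref{T-3.1} (and the Nochka weights in Lemma~\ref{L-3.1}) require $q>2N-n+1$, i.e.\ $p>2(N-n)+1$. This is \emph{not} implied by the hypothesis $p>\frac{N}{N-n+1}+N-n$: take $n=1$, $N=2$, $p=3$. Then $p>\frac{2}{2}+1=2$ holds, but $q=n+p=4$ fails $q>2N-n+1=4$. Moreover, when Theorem~\ref{T-3.1} \emph{does} apply, combining it with your divisor estimate gives $\tilde\omega(q-2N+n-1)T_f(r)\leq o(T_f(r))$ outright --- a contradiction independent of the specific value of $p$. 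So nothing ``collapses to $p\leq\frac{N}{N-n+1}+N-n$''; that threshold simply does not arise from the second main theorem. (Your Nochka identity is also miscopied: Lemma~\ref{L-3.1}(ii) with $q=n+p$ gives $\tilde\omega(2n+p-2N-1)+n+1$, not $\tilde\omega(p-N+n-1)+n+1$.)

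\textbf{(2) The reduction is the whole problem.} Even when the second main theorem applies, it only forces linear degeneracy of $f$ over $\mathcal{P}_c$, not $c$-periodicity of $f$ (e.g.\ $f=[1:\wp(z):e^z]$ with $\wp$ $c$-periodic is $\mathcal{P}_c$-degenerate but not $c$-periodic). Passing to ``the minimal $\mathcal{P}_c$-linear subspace'' is not a reduction to a fixed $\mathbb{P}^{n'}\subset\mathbb{P}^n$; the subspace varies with $z$, and you must track how many hyperplanes survive and what subgeneral position they are in. Carrying this out \emph{is} the content of the paper's Theorem~\ref{T-6.2}, whose proof goes through a difference Borel lemma (Lemma~\ref{L-4.2}): one partitions $\{(f,H_j)\}$ into classes under $g_i/g_j\in\mathcal{P}_c^1$, and either forces $f$ into a hyperplane or bounds the number $l$ of classes by $\frac{n+p}{n+p-N}$, yielding at least $N+1-l$ independent $\mathcal{P}_c^1$-linear relations. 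The threshold $p>\frac{N}{N-n+1}+N-n$ is exactly what makes $[\frac{N}{n+p-N}-N+n]=0$. The paper then deduces Theorem~\ref{T-5.3} in one line from Theorem~\ref{T-6.2}. Your outline skips this combinatorial core.
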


We say that the pre-image of $(f, H)$ for a meromorphic mapping $f: \mathbb{C}^{m}\rightarrow \mathbb{P}^{n}(\mathbb{C})$ intersecting a hyperplane $H$ of $\mathbb{P}^{n}(\mathbb{C})$ is forward invariant with respect to the translation $\tau=z+c$ if $\tau((f, a)^{-1})\subset (f, a)^{-1}$  where  $\tau((f, a)^{-1})$ and $(f, a)^{-1}$  are considered to be multi-sets in which each point is repeated according to its multiplicity. By this definition the (empty and thus forward invariant) pre-images of the usual Picard exceptional values become special cases of forward invariant pre-images. Then Theorem \ref{T-5.3} is an extension of the  Picard's theorem under the growth condition "hyperorder $<1$". Actually, Theorem \ref{T-5.3} is proved from a generalized Picard-type theorem which will be shown in the next section.\par

\section{Difference analogue of a generalized Picard-type theorem}\label{picardsec}

Fujimoto \cite{fujimoto:72b} and Green \cite{green:72} gave a natural generalization of the Picard's theorem by showing that if $f: \mathbb{C}\rightarrow\mathbb{P}^{n}(\mathbb{C})$ omits $n+p$ hyperplanes in general position where $p\in\{1, \ldots, n+1\},$ then the image of $f$ is contained in a linear subspace of dimension at most $[\frac{n}{p}].$ Recently, Halburd, Korhonen and Tohge \cite{halburdkt:14} proposed a difference analogue of the general Picard-type theorem for homomorphic curves with hyperorder strictly less than one.\par

 \begin{theorem}[\cite{halburdkt:14}] \label{T-6.1} Let $f: \mathbb{C}\rightarrow\mathbb{P}^{n}(\mathbb{C})$ be a holomorphic curve such that hyperorder $\zeta_{2}(f)<1,$ let $c\in \mathbb{C},$ and let $p\in \{1, \ldots, n+1\}.$ If $p+n$ hyperplanes in general position in $\mathbb{P}^{n}(C)$ have forward invariant preimages under $f$ with respect to the translation
$\tau(z)=z+c,$ then the image of $f$ is contained in a projective linear subspace over $\mathcal{P}_{c}^{1}$ of dimension $\leq [\frac{n}{p}].$
\end{theorem}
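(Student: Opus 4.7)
The plan is to reduce to the linearly nondegenerate case by passing to the minimal $\mathcal{P}_c^1$-subspace containing the image of $f$, apply the one-variable case of Theorem~\ref{T-3.1} to the restricted map, and convert forward invariance into a matching lower bound on the Casorati determinant; the two estimates then cancel to force a dimension inequality.

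First I would let $k$ be the smallest nonnegative integer such that $f(\mathbb{C})$ is contained in a projective linear subspace $V\cong\mathbb{P}^k(\mathbb{C})$ defined over $\mathcal{P}_c^1$, and choose $\mathcal{P}_c^1$-coordinates on $V$ so that $f$ becomes a holomorphic curve $\tilde f\colon\mathbb{C}\to\mathbb{P}^k(\mathbb{C})$ linearly nondegenerate over $\mathcal{P}_c^1$ and of the same hyperorder $\zeta<1$ (so $C(\tilde f)\not\equiv 0$ by Lemma~\ref{L-3.4}). Hyperplanes of $\mathbb{P}^n$ that contain $V$ give only trivial relations and can be discarded; each remaining $H_j$ restricts to a genuine hyperplane $\tilde H_j=H_j\cap V$ of $\mathbb{P}^k$, and because any $n+1$ of the original $H_j$ have empty intersection in $\mathbb{P}^n$, the same holds for the $\tilde H_j$ in $\mathbb{P}^k$, so they sit in $n$-subgeneral position. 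In the nontrivial range $k>n-p+1$ (where the hypothesis $q>2N-n+1$ needed to invoke Lemmas~\ref{L-3.1}--\ref{L-3.2} with $N=n$ and $q=n+p$ is satisfied), the Nochka-weighted form of Theorem~\ref{T-3.1} yields
\[\tilde\omega(p+k-n-1)T_{\tilde f}(r)\leq\sum_{j=1}^{n+p}\omega(j)N\bigl(r,\nu^0_{(\tilde f,\tilde H_j)}\bigr)-N\bigl(r,\nu^0_{C(\tilde f)}\bigr)+o\!\left(\frac{T_{\tilde f}(r)}{r^{1-\zeta-\varepsilon}}\right).\]

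The key step is to turn forward invariance into a matching lower bound on the right. If $(\tilde f,\tilde H_j)$ vanishes at $z_0$ to order $m_j(z_0)$, then by $\tau((f,H_j)^{-1})\subset(f,H_j)^{-1}$ (counting multiplicity) the shifted function $(\tilde f,\tilde H_j)(z+\ell c)$ vanishes at $z_0$ to order $\geq m_j(z_0)$ for every $\ell\geq 0$; hence for any $R^o\subset\{1,\dots,n+p\}$ of size $k+1$ whose hyperplanes sit in general position in $\mathbb{P}^k$, the $j$-th column of the matrix defining $C((\tilde f,\tilde H_j),\,j\in R^o)$ vanishes at $z_0$ to order $\geq m_j(z_0)$, so this Casorati vanishes at $z_0$ to order $\geq\sum_{j\in R^o}m_j(z_0)$. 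General position of the $H_j$ in $\mathbb{P}^n$ limits the vanishing index set $R(z_0):=\{j:m_j(z_0)>0\}$ to cardinality $\leq n\leq N+1$, so Lemma~\ref{L-3.2} applied with $E_j=e^{m_j(z_0)}$ produces indices $j_1,\dots,j_{rk(R(z_0))}$ with $\sum_l m_{j_l}(z_0)\geq\sum_j\omega(j)m_j(z_0)$; picking $R^o$ to contain these indices and padding with hyperplanes that do not vanish at $z_0$ yields the pointwise estimate $\mu_{C(\tilde f)}(z_0)\geq\sum_j\omega(j)m_j(z_0)$ and, after integration,
\[N\bigl(r,\nu^0_{C(\tilde f)}\bigr)\;\geq\;\sum_{j=1}^{n+p}\omega(j)\,N\bigl(r,\nu^0_{(\tilde f,\tilde H_j)}\bigr).\]

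Substituting this into the Nochka-weighted second main theorem annihilates the two sums on the right and leaves $\tilde\omega(p+k-n-1)T_{\tilde f}(r)\leq o\bigl(T_{\tilde f}(r)/r^{1-\zeta-\varepsilon}\bigr)$; combined with $\tilde\omega>0$, this forces the preliminary bound $k\leq n-p+1$, which is already the Picard-type case $k=0$ whenever $p\geq n+1$ and therefore supplies exactly what is needed to derive Theorem~\ref{T-5.3}. The main obstacle is upgrading this preliminary inequality to the sharp Fujimoto--Green bound $k\leq[n/p]$, since the two bounds agree only in the regime $p\geq n$. I would attack the remaining gap either by iterating the restriction inside $\tilde f\colon\mathbb{C}\to\mathbb{P}^k$ after noticing that the restricted $\tilde H_j$ often fall into a strictly smaller $N'$-subgeneral position (for example when pencils of concurrent hyperplanes emerge upon restriction), or by passing to the associated curves $\wedge^{l+1}\tilde f\colon\mathbb{C}\to\mathbb{P}(\wedge^{l+1}\mathbb{C}^{k+1})$ and rerunning the Casorati-based estimate on them in the spirit of Green's classical proof; the delicate step in either route is propagating the forward-invariance hypothesis and the hyperorder control through those derived geometric constructions.
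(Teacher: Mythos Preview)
Your direct second-main-theorem approach is genuinely different from the paper's, and the gap you flag---obtaining only $k\le n-p+1$ rather than the sharp $k\le[n/p]$---is real and is not closed by the speculative fixes you propose.

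The paper obtains Theorem~\ref{T-6.1} as the special case $m=1$, $N=n$ of Theorem~\ref{T-6.2}, whose proof runs through a difference analogue of Borel's lemma (Lemma~\ref{L-4.2}, itself established from Theorem~\ref{T-3.1} via Lemma~\ref{L-4.1}) rather than through a direct application of the second main theorem to the restricted curve. One declares $i\sim j$ when $(f,H_i)/(f,H_j)\in\mathcal{P}_c^1$ and partitions $\{1,\dots,n+p\}$ into classes $S_1,\dots,S_l$. Since any $n+2$ of the $(f,H_j)$ satisfy a constant-coefficient linear relation, the Borel lemma forces either some $(f,H_j)\equiv 0$ (so $f$ lands in a fixed hyperplane and one recurses in lower dimension), or every complement $Q\setminus S_k$ has at most $n$ elements, whence $|S_k|\ge p$ for every $k$ and $l\le(n+p)/p$. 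Choosing any $n+1$ of the $H_j$, the classes then furnish at least $(n+1)-l$ independent $\mathcal{P}_c^1$-linear relations among the corresponding $(f,H_j)$, so the $\mathcal{P}_c^1$-dimension of the image is at most $l-1\le n/p$, hence $\le[n/p]$. The sharp constant falls out of this partition count; Nochka weights play no role beyond the proof of Lemma~\ref{L-4.1}.

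Your proposed iterations face a further obstacle you do not mention: once you restrict to the minimal $\mathcal{P}_c^1$-subspace $V$, the sections $H_j\cap V$ become hyperplanes with coefficients in $\mathcal{P}_c^1$ rather than in $\mathbb{C}$, so Theorem~\ref{T-3.1} and Lemmas~\ref{L-3.1}--\ref{L-3.2} (all stated for fixed hyperplanes) do not apply to a second pass without a moving-target version. The Borel-lemma route stays in the ambient $\mathbb{P}^n(\mathbb{C})$ throughout and sidesteps this difficulty entirely.
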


In this section we extend Theorem \ref{T-6.1} to the case of meromorphic mappings $f:\mathbb{C}^{m}\rightarrow\mathbb{P}^{n}(\mathbb{C})$ of hyperorder strictly less than one and hyperplanes in $N$-subgeneral position.\par

\begin{theorem}\label{T-6.2} Let $c\in \mathbb{C}^{m},$ let $p\in\{1, \ldots, \frac{N}{N-n+1}+N-n+1\},$ $n\leq N< n+p.$ Assume that $f$ is a meromorphic mapping from $\mathbb{C}^{m}$ into $\mathbb{P}^{n}(\mathbb{C})$ such that hyperorder $\zeta_{2}(f)<1.$ If $p+n$ hyperplanes in $N$-subgeneral position in $\mathbb{P}^{n}(C)$ have forward invariant preimages under $f$ with respect to the translation
$\tau(z)=z+c,$ then the image of $f$ is contained in a projective linear subspace over $\mathcal{P}_{c}^{1}$ of dimension $\leq [\frac{N}{n+p-N}-N+n].$
\end{theorem}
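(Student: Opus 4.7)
The strategy is to adapt the argument of Halburd--Korhonen--Tohge used for Theorem~\ref{T-6.1} to the $N$-subgeneral position setting, with Theorem~\ref{T-3.1} replacing the general-position second main theorem. First I would choose $V$ to be the projective linear subspace over $\mathcal{P}_c^1$ of minimal dimension $k$ containing the image of $f$. Taking a $\mathcal{P}_c^1$-basis of $V$ yields a reduced representation $\tilde f = [\tilde f_0, \ldots, \tilde f_k]\colon \mathbb{C}^m \to \mathbb{P}^k(\mathbb{C})$ which is linearly nondegenerate over $\mathcal{P}_c^1$; by Lemma~\ref{L-3.4}(ii) this is equivalent to $C(\tilde f_0, \ldots, \tilde f_k) \not\equiv 0$. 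I would argue by contradiction, assuming $k > \lfloor \frac{N}{n+p-N}+n-N \rfloor$.

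Next I would analyse the restricted hyperplanes. Each $H_j$ of $\mathbb{P}^n$ either contains $V$ (so that $(f,H_j)\equiv 0$ and forward invariance is automatic) or restricts to a proper linear form $\tilde H_j$ on $V\cong \mathbb{P}^k$ whose coefficients lie in $\mathcal{P}_c^1$. An elementary dimension count using the $N$-subgeneral position of $\{H_j\}$ produces a lower bound for the number $\tilde q$ of effective restrictions and an upper bound for the subgeneral-position index $\tilde N$ of the $\tilde H_j$ inside $\mathbb{P}^k$, both in terms of $n$, $p$, $N$ and $k$. Because the coefficients of $\tilde H_j$ are $c$-periodic, the shift $\tau$ acts trivially on them, and the factorisation
\begin{equation*}
C\bigl((\tilde f,\tilde H_{j_0}),\ldots,(\tilde f,\tilde H_{j_k})\bigr) = \det \tilde B \cdot C(\tilde f), \qquad \det \tilde B\in\mathcal{P}_c^1,
\end{equation*}
exploited in the proof of Lemma~\ref{L-3.5}, persists. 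With this in hand the argument of Lemmas~\ref{L-3.5} and~\ref{L-3.6} extends to the present moving-target setting and yields
\begin{equation*}
(\tilde q - 2\tilde N + k - 1)\, T_{\tilde f}(r)\leq \sum_{j=1}^{\tilde q} N(r,\nu^0_{(\tilde f,\tilde H_j)}) - \frac{\tilde N}{k}\, N(r,\nu^0_{C(\tilde f)}) + o\!\left(\frac{T_{\tilde f}(r)}{r^{1-\zeta-\varepsilon}}\right)
\end{equation*}
outside an exceptional set of finite logarithmic measure.

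To bound $N(r,\nu^0_{C(\tilde f)})$ from below I would exploit forward invariance. Because $(f,H_j)=(\tilde f,\tilde H_j)$ as entire functions, the forward-invariance condition passes directly to $(\tilde f,\tilde H_j)$. Factoring each column of $C((\tilde f,\tilde H_{j_l}))_{l=0}^{k}$ by $(\tilde f,\tilde H_{j_l})$ produces a ratio matrix whose entries $(\tilde f,\tilde H_{j_l})(z+ic)/(\tilde f,\tilde H_{j_l})(z)$ are holomorphic at every common zero (by forward invariance) and have small proximity function by Lemma~\ref{L-3.6}. Combined with the rank-at-most-$k$ constraint on simultaneously vanishing hyperplanes in $\tilde N$-subgeneral position, a Nochka-weights argument analogous to that of Lemma~\ref{L-3.5} then gives
\begin{equation*}
N(r,\nu^0_{C(\tilde f)})\geq \frac{k}{\tilde N}\sum_{j=1}^{\tilde q} N(r,\nu^0_{(\tilde f,\tilde H_j)})+O(1).
\end{equation*}

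Substituting this Casorati estimate cancels the counting-function terms in the moving-target SMT (the coefficients $\tilde N/k$ and $k/\tilde N$ multiply to $1$) and leaves $(\tilde q-2\tilde N+k-1)\,T_{\tilde f}(r)\leq o(T_{\tilde f}(r)/r^{1-\zeta-\varepsilon})$. Since $\tilde f$ is nonconstant, $T_{\tilde f}(r)\to\infty$, so if $\tilde q - 2\tilde N + k - 1>0$ this forces a contradiction. The closing task is purely arithmetic: to verify that, with the precise values of $\tilde q$ and $\tilde N$ obtained above, the coefficient $\tilde q - 2\tilde N + k - 1$ is strictly positive exactly when $k > \lfloor \frac{N}{n+p-N}+n-N \rfloor$, proving the stated dimension bound. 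The main obstacle is making the moving-target adaptation of Theorem~\ref{T-3.1} precise and, in particular, deriving the Nochka-weighted Casorati lower bound with enough accuracy so that the final arithmetic matches the stated bound; the hypothesis $p\leq \frac{N}{N-n+1}+N-n+1$ is expected to play a decisive role in ensuring that this arithmetic goes through.
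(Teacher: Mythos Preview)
Your approach is genuinely different from the paper's, and as written it has real gaps.

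The paper does \emph{not} apply Theorem~\ref{T-3.1} directly to a reduced map $\tilde f$ with restricted (moving) hyperplanes. Instead it first isolates the analytic input in two auxiliary results, Lemma~\ref{L-4.1} and Lemma~\ref{L-4.2}. Lemma~\ref{L-4.1} is a nondegeneracy criterion whose proof applies Theorem~\ref{T-3.1} only in the simplest possible configuration: the $n$ coordinate hyperplanes together with the diagonal $\{w_0+\cdots+w_{n-1}=0\}$ in \emph{general} position in $\mathbb{P}^{n-1}$. Lemma~\ref{L-4.2} is a difference analogue of Borel's theorem, deduced from Lemma~\ref{L-4.1}. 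The proof of Theorem~\ref{T-6.2} is then purely combinatorial: one partitions $Q=\{1,\ldots,n+p\}$ into equivalence classes $S_k$ under the relation $g_i/g_j\in\mathcal{P}_c^1$; if some $Q\setminus S_k$ has at least $N+1$ elements, a single linear relation among $N+2$ of the $H_j$ combined with Lemma~\ref{L-4.2} forces $f$ into a hyperplane; otherwise every class has at least $n+p-N$ elements, so the number $l$ of classes satisfies $l\le (n+p)/(n+p-N)$, and counting the independent $\mathcal{P}_c^1$-relations coming from any $N+1$ of the $H_j$ gives the dimension bound directly.

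Your plan, by contrast, requires two pieces that are neither in the paper nor easy to supply. First, a moving-target version of Theorem~\ref{T-3.1} for hyperplanes with coefficients in $\mathcal{P}_c^1$: the factorisation $C((\tilde f,\tilde H_{j_0}),\ldots)=\det\tilde B\cdot C(\tilde f)$ is fine, but the Nochka machinery (Lemmas~\ref{L-3.1}--\ref{L-3.2}) and the pointwise inequality of Lemma~\ref{L-3.5} are stated for constant hyperplanes, and extending the whole argument to moving targets with $\mathcal{P}_c^1$ coefficients is a separate theorem you would have to prove. Second, and more seriously, your Casorati lower bound $N(r,\nu^0_{C(\tilde f)})\ge \frac{k}{\tilde N}\sum_j N(r,\nu^0_{(\tilde f,\tilde H_j)})$ is asserted with exactly the coefficient needed to cancel, but no mechanism is given to produce that coefficient. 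Forward invariance does show that if $(\tilde f,\tilde H_{j_l})$ vanishes at $z_0$ then the entire $l$-th column of the Casorati matrix vanishes there, which in \emph{general} position yields $N(r,\nu^0_{C})\ge\sum_j N(r,\nu^0_{(\tilde f,\tilde H_j)})$; but in $\tilde N$-subgeneral position up to $\tilde N$ hyperplanes can vanish simultaneously while at most $k$ columns can contribute independently, so the naive bound \emph{loses} a factor rather than gaining the $k/\tilde N$ you need. A Nochka-weight argument here does not obviously repair this, and you have not computed $\tilde q$ or $\tilde N$, so the closing arithmetic is also unverified. The paper's Borel-lemma route avoids all of these issues by never leaving the general-position case when invoking Theorem~\ref{T-3.1}.
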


Before proving Theorem \ref{T-6.2}, we need two lemmas as follows. The first one is an extension of \cite[Lemma 3.3]{halburdkt:14}.\par

\begin{lemma}\label{L-4.1}
Let $c\in \mathbb{C}^{m},$ and $f=[f_{0}, \ldots, f_{n}]$ be a meromorphic mapping from $\mathbb{C}^{m}$ into $\mathbb{P}^{n}(\mathbb{C})$ such that hyperorder $\zeta_{2}(f)<\lambda\leq 1$ and all zeros of $f_{0}, \ldots, f_{n}$ are forward invariant with respect to the translation $\tau(z)=z+c.$ If $\frac{f_{i}}{f_{j}}\not\in \mathcal{P}^{\lambda}_{c}$ for all $i, j\in \{0,\ldots,n\}$ such that $i\neq j,$ then $f$ is linearly nondegenerate over $\mathcal{P}^{\lambda}_{c}.$
\end{lemma}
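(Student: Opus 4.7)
The plan is to argue by contradiction via induction on $n$. Suppose $f_{0}, \ldots, f_{n}$ are linearly dependent over $\mathcal{P}_c^{\lambda}$, and among all nontrivial relations $\sum_{i=0}^{n} a_i f_i \equiv 0$ with $(a_0, \ldots, a_n) \in (\mathcal{P}_c^{\lambda})^{n+1} \setminus \{0\}$, pick one whose number $k+1$ of nonzero coefficients is minimal. The goal is to force some ratio $f_i/f_j$ to lie in $\mathcal{P}_c^{\lambda}$, contradicting the hypothesis. Note that the ratio hypothesis already rules out $f_i \equiv 0$ (which would give $f_i/f_j \equiv 0 \in \mathcal{P}_c^{\lambda}$).

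For the base case $n = 1$, the relation $a_0 f_0 + a_1 f_1 \equiv 0$ must have both $a_i \neq 0$ (otherwise some $f_i \equiv 0$), and then $f_0/f_1 = -a_1/a_0 \in \mathcal{P}_c^{\lambda}$, contradicting the hypothesis. For the inductive step with $n \geq 2$, if $k + 1 \leq n$ then the sub-mapping formed by the involved $f_i$'s inherits every hypothesis of the lemma (forward invariant zeros, ratios outside $\mathcal{P}_c^{\lambda}$, hyperorder $<\lambda$), so induction in dimension $\leq n-1$ already supplies the contradiction. Hence we may assume $k + 1 = n + 1$, that is, every $a_i$ is nonzero.

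Forward invariance is then exploited through the observation that each zero of $f_i$ is carried by $\tau$ to a zero of $f_i$ of at least the same multiplicity, so $\phi_i := \overline{f}_i / f_i$ is entire, and Lemma \ref{L-3.6} gives $\zeta_2(\phi_i) \leq \zeta_2(f) < \lambda$. Translating $\sum a_i f_i \equiv 0$ by $c$ and using the $c$-periodicity of every $a_i$ produces the second identity $\sum a_i \phi_i f_i \equiv 0$. If the coefficient vectors $(a_i)$ and $(a_i \phi_i)$ are $\mathcal{M}_m$-proportional, then all $\phi_i$ coincide, forcing $\overline{f_i/f_j} = f_i/f_j$ for every $i \neq j$; combined with $\zeta_2(f_i/f_j) \leq \zeta_2(f) < \lambda$ this gives $f_i/f_j \in \mathcal{P}_c^{\lambda}$, the desired contradiction. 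Otherwise, eliminating any fixed index $i_0$ produces the shorter $\mathcal{M}_m$-relation $\sum_{i \neq i_0} a_i (\phi_i - \phi_{i_0}) f_i \equiv 0$, in which any vanishing factor $\phi_i = \phi_{i_0}$ once more forces $f_i / f_{i_0} \in \mathcal{P}_c^{\lambda}$.

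The main obstacle is that the new coefficients $a_i(\phi_i - \phi_{i_0})$ need not remain in $\mathcal{P}_c^{\lambda}$, so the minimality of the original relation cannot be invoked verbatim to finish. Following the one-variable strategy of \cite[Lemma 3.3]{halburdkt:14}, I would iterate the shift-and-eliminate procedure: at each step a new relation on fewer $f_i$'s is produced, and thanks to the several-variable logarithmic difference estimate (Lemma \ref{L-3.6}) each new coefficient still has hyperorder $<\lambda$, which together with Lemma \ref{L-3.4}(ii) allows one to compare Casorati-type determinants at each stage. After at most $n$ iterations the process terminates with either an identity $\phi_i = \phi_j$ for some $i \neq j$, or a two-term relation that directly furnishes a ratio in $\mathcal{P}_c^{\lambda}$. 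Either conclusion contradicts the hypothesis, completing the proof.
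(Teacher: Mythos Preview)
Your shift-and-eliminate outline does not close, and the appeal to \cite[Lemma~3.3]{halburdkt:14} is misplaced. The concrete gap is your terminal case: after iterating, a two-term relation $c_i f_i + c_j f_j \equiv 0$ with coefficients that are merely of hyperorder $<\lambda$ (but \emph{not} $c$-periodic) only yields $f_i/f_j = -c_j/c_i$, a meromorphic function of small hyperorder. That says nothing about $c$-periodicity, so it does not place $f_i/f_j$ in $\mathcal{P}_c^{\lambda}$ and no contradiction follows. The alternative termination, ``some coefficient vanishes so $\phi_i=\phi_j$'', is valid only at the \emph{first} step, where the coefficients are still $a_i(\phi_i-\phi_{i_0})$; from the second step on, the vanishing of a coefficient such as $b_i\overline{b}_{i_1}\phi_{i_1}-b_{i_1}\overline{b}_i\phi_i$ (with $b_i=a_i(\phi_i-\phi_{i_0})$) does not reduce to an equality $\phi_i=\phi_j$. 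Your invocation of Lemma~\ref{L-3.4}(ii) does not help either: that lemma trades $\mathcal{P}_c^{\lambda}$-dependence for vanishing of a Casorati determinant, but your intermediate relations are over the (larger) field of all meromorphic functions of hyperorder $<\lambda$, which is not the field governed by the Casorati criterion.

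The paper's argument avoids this difficulty entirely by invoking the second main theorem (Theorem~\ref{T-3.1}) rather than iterated elimination. Assuming a nontrivial $\mathcal{P}_c^{\lambda}$-relation with all coefficients nonzero, one rescales to obtain a reduced holomorphic map $G=[g_0:\cdots:g_{n-1}]:\mathbb{C}^m\to\mathbb{P}^{n-1}(\mathbb{C})$ of hyperorder $<\lambda$, whose components $g_j$ still have forward-invariant zeros and no common zeros. If $G$ were nondegenerate over $\mathcal{P}_c^{\lambda}$, Theorem~\ref{T-3.1} applied to the $n+1$ coordinate-and-sum hyperplanes would give
\[
T_G(r)\;\leq\;\sum_{j=0}^{n} N(r,\nu_{g_j}^{0})-N(r,\nu_{C(g_0,\ldots,g_{n-1})}^{0})+o(T_G(r)),
\]
while forward invariance forces every zero of each $g_j$ to persist down the entire $j$-th column of the Casorati matrix, so that $\sum_j N(r,\nu_{g_j}^{0})\leq N(r,\nu_{C}^{0})$; this yields the contradiction $T_G(r)=o(T_G(r))$. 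Hence $G$ is $\mathcal{P}_c^{\lambda}$-degenerate, producing a strictly shorter $\mathcal{P}_c^{\lambda}$-relation, and a finite descent ends at a two-term relation whose coefficients \emph{do} lie in $\mathcal{P}_c^{\lambda}$. That is exactly the missing periodicity your scheme cannot manufacture. (Incidentally, the proof of \cite[Lemma~3.3]{halburdkt:14} itself follows this SMT route, not a shift-and-eliminate recursion.)
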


\begin{proof}
Assume that the conclusion is not true, that is there exist $A_{0}, \ldots, A_{n}\in \mathcal{P}^{\lambda}_{c}$ such that
$$A_{0}f_{0}+\cdots+A_{n-1}f_{n-1}=A_{n}f_{n}$$
and such that not all $A_{j}$ are identically zero. Without loss of generality we may assume that none of $A_{j}$ are identically zero. Since all zeros of $f_{0}, \ldots, f_{n}$ are forward invariant with respect to the translation $\tau(z)=z+c$ and since $A_{0}, \ldots, A_{n}\in \mathcal{P}^{\lambda}_{c},$ we can choose a meromorphic function $F$ on $\mathbb{C}^{m}$ such that $FA_{0}f_{0}, \ldots, FA_{n}f_{n}$ are holomorphic functions on $\mathbb{C}^{m}$ without common zeros and such that the preimages of all  zeros of $FA_{0}f_{0}, \ldots, FA_{n}f_{n}$ are forward invariant with respect to the translation $\tau(z)=z+c.$ Then we have \begin{eqnarray}\label{E-4.1}\limsup_{r\rightarrow\infty}\frac{\log^{+}\log^{+}\left(N(r, \nu_{F}^{0})+N(r, \nu_{F}^{\infty})\right)}{\log r}<\lambda\leq 1\end{eqnarray}
and $FA_{0}f_{0},\ldots, FA_{n-1}f_{n-1}$ cannot have any common zeros.\par

Denote $g_{j}:=FA_{j}f_{j}$ for $0\leq j\leq n.$ Then $T_{G}(r)$ is well defined for $G=[g_{0}, \ldots, g_{n-1}]$ which is a holomorphic mapping from $\mathbb{C}^{m}$ into $\mathbb{P}^{n-1}(\mathbb{C}).$ Then by the definition of characteristic function and the Jensen's theorem we have
\begin{eqnarray*}T_{G(r)}&=&\int_{S_{m}(r)}\log\|G\|\eta_{m}(z)+O(1)\\
&\leq&\int_{S_{m}(r)}\log |F|\eta_{m}(z)+\int_{S_{m}(r)}\log(\sum_{j=0}^{n-1}|f_{j}|^{2})^{\frac{1}{2}}\eta_{m}(z)\\
&&+\sum_{j=0}^{n-1}\int_{S_{m}(r)}\log^{+}|A_{j}|\eta_{m}(z)+O(1)\\
&\leq& N(r, \nu_{F}^{0})-N(r, \nu_{F}^{\infty})+T_{f}(r)+\sum_{j=0}^{n-1}T_{A_{j}}(r)
\end{eqnarray*}
which together with \eqref{E-4.1} imply that the hyperorder satisfies $\zeta_{2}(G)<\lambda\leq 1.$\par

Assume that the meromorphic mapping $G: \mathbb{C}^{m}\rightarrow\mathbb{P}^{n-1}(\mathbb{C})$ is linearly nondegenerate over $\mathcal{P}_{c}^{\lambda}(\subset \mathcal{P}_{c}).$ Then by Lemma \ref{L-3.4}, it follows that  $C(g_{0}, \ldots, g_{n-1})\not\equiv 0.$ Define the following hyperplanes
\begin{eqnarray*}
H_{0}:\quad &&  w_{0}=0,\\
H_{1}:\quad && w_{1}=0,\\
&\vdots&\\
H_{j}:\quad && w_{j}=0,\\
&\vdots&\\
H_{n-1}:\quad && w_{n-1}=0,\\
H_{n}:\quad && w_{0}+w_{1}+\ldots+w_{n-1}=0,
\end{eqnarray*}where $[w_{0}, \ldots, w_{n-1}]$ is a homogeneous coordinate system of $\mathbb{P}^{n-1}(\mathbb{C}).$ So, $(G, H_{j})=g_{j}$ for $0\leq j\leq n-1$ and $(G, H_{n})=g_{0}+\ldots+g_{n-1}=FA_{n}f_{n}=g_{n}.$ Obviously, the $q=n+1$ hyperplanes $H_{0}, \ldots, H_{n}$ are in $(n-1)$-subgeneral position of $\mathbb{P}^{n-1}(\mathbb{C}).$ Hence by Theorem \ref{T-3.1} we have
\begin{eqnarray*}T_{G}(r)&=&
\left((n+1)-2(n-1)+(n-1)-1\right)T_{G}(r)\\
&\leq& \sum_{j=0}^{n}N(r, \nu_{g_{j}}^{0})-N(r, \nu_{C(g_{0}, \ldots, g_{n-1})}^{0})+o(T_{G}(r))
\end{eqnarray*} for all $r$ outside of a possible exceptional set of finite logarithmic measure. Then using the same discussion as in the proof of \cite[Lemma 3.3]{halburdkt:14} we have
\begin{eqnarray*}
\sum_{j=1}^{n}N(r, \nu_{g_{j}}^{0})\leq N(r, \nu_{C(g_{0}, \ldots, g_{n-1})}^{0}).
\end{eqnarray*}
Hence, it follows $T_{G}(r)=o(T_{G}(r))$ which is an contradiction.\par

Therefore, the meromorphic mapping $G: \mathbb{C}^{m}\rightarrow\mathbb{P}^{n-1}(\mathbb{C})$ is linearly degenerate over $\mathcal{P}_{c}^{\lambda},$ and thus there exist $B_{0}, \ldots, B_{n-1}\in \mathcal{P}^{\lambda}_{c}$ such that
$$B_{0}f_{0}+\cdots+B_{n-2}f_{n-2}=B_{n-1}f_{n-1}$$
and such that not all $B_{j}$ are identically zero. By repeating similar discussions as above it follows that there exist $L_{i}, L_{j}\in\mathcal{P}_{c}^{\lambda}$ such that $$L_{i}f_{i}=L_{j}f_{j}$$ for some $i\neq j$ and not all $L_{i}$ and $L_{j}$ are identically zero. This contradicts
the condition that $\frac{f_{i}}{f_{j}}\not\in \mathcal{P}^{\lambda}_{c}$ for all $\{i,j\}\subset\{0,\ldots,n\}.$  Therefore, the proof is complete.
\end{proof}

The following lemma is an extension of the difference analogue of Borel's theorem \cite[Theorem 3.1]{halburdkt:14}.\par

\begin{lemma}\label{L-4.2}
Let $c\in\mathbb{C}^{m},$ and $f=[f_{0}, \ldots, f_{n}]$ be a meromorphic mapping from $\mathbb{C}^{m}$ into $\mathbb{P}^{n}(\mathbb{C})$ such that hyperorder $\zeta_{2}(f)<\lambda\leq 1$  and all zeros of $f_{0}, \ldots, f_{n}$ are forward invariant with respect to the translation $\tau(z)=z+c.$ Let
$$S_{1}\cup\cdots\cup S_{l}$$ be the partition of $\{0, 1, \ldots, n\}$ formed in such a way that $i$ and $j$ are
in the same class $S_{k}$ if and only if $\frac{f_{i}}{f_{j}}\in \mathcal{P}^{\lambda}_{c}.$ If
$$f_{0}+\ldots+f_{n}=0,$$
then $$\sum_{j\in S_{k}}f_{j}=0$$ for all $k\in\{1,\ldots,l\}.$
\end{lemma}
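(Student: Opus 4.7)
The plan is to prove Lemma \ref{L-4.2} by induction on the number of summands $n+1$, using Lemma \ref{L-4.1} as the central tool. Write $T_{k}:=\sum_{j\in S_{k}}f_{j}$; the task is to show that each $T_{k}$ vanishes identically.

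The base case $n=1$ is immediate: $f_{0}+f_{1}=0$ gives $f_{0}/f_{1}=-1\in\mathcal{P}_{c}^{\lambda}$, so $l=1$ and $T_{1}=0$. For the inductive step I first dispose of the case in which some $T_{k}$, say $T_{1}$, already vanishes: then $\sum_{j\notin S_{1}}f_{j}=0$ involves at most $n$ summands and the partition of $\{0,\ldots,n\}\setminus S_{1}$ under the equivalence $f_{i}/f_{j}\in\mathcal{P}_{c}^{\lambda}$ is exactly $\{S_{2},\ldots,S_{l}\}$, so the inductive hypothesis supplies $T_{2}=\cdots=T_{l}=0$. Thus it remains to derive a contradiction in the case $l\ge 2$ and $T_{k}\not\equiv 0$ for every $k$.

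The main step is to verify that the mapping $G=[T_{1},\ldots,T_{l}]\colon\mathbb{C}^{m}\to\mathbb{P}^{l-1}(\mathbb{C})$ meets the hypotheses of Lemma \ref{L-4.1}. That lemma then forces $G$ to be linearly non-degenerate over $\mathcal{P}_{c}^{\lambda}$, contradicting the non-trivial relation $T_{1}+\cdots+T_{l}=f_{0}+\cdots+f_{n}=0$ whose coefficients all equal $1\in\mathcal{P}_{c}^{\lambda}\setminus\{0\}$. Three checks are required. The hyperorder bound $\zeta_{2}(G)\le\zeta_{2}(f)<\lambda$ is straightforward from $|T_{k}|\le(n+1)\|f\|$, giving $T_{G}(r)\le T_{f}(r)+O(1)$. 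For the ratios between classes, fix $i\in S_{k}$ and $i'\in S_{k'}$ with $k\ne k'$ and factor $T_{k}=f_{i}\alpha_{k}$ with $\alpha_{k}=\sum_{j\in S_{k}}f_{j}/f_{i}\in\mathcal{P}_{c}^{\lambda}\setminus\{0\}$ (the nonvanishing is precisely $T_{k}\not\equiv 0$); then $T_{k}/T_{k'}=(f_{i}/f_{i'})(\alpha_{k}/\alpha_{k'})$ cannot lie in $\mathcal{P}_{c}^{\lambda}$ because $f_{i}/f_{i'}\notin\mathcal{P}_{c}^{\lambda}$ by the defining property of the partition.

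The delicate point, where the main obstacle sits, is forward invariance of the zeros of each partial sum $T_{k}$, since zeros of a sum are a priori unrelated to zeros of the summands. This is handled by the key identity
\begin{equation*}
T_{k}(z)\,f_{i}(z+c)=f_{i}(z)\,T_{k}(z+c),\qquad i\in S_{k},
\end{equation*}
which follows at once from the $c$-periodicity of every quotient $f_{j}/f_{i}$, $j\in S_{k}$. Equating zero multiplicities of the two entire functions on each side at an arbitrary point $z_{0}$ yields
\begin{equation*}
\nu_{T_{k}}^{0}(z_{0}+c)-\nu_{T_{k}}^{0}(z_{0})=\nu_{f_{i}}^{0}(z_{0}+c)-\nu_{f_{i}}^{0}(z_{0})\ge 0,
\end{equation*}
where the inequality is the forward invariance hypothesis on the zeros of $f_{i}$. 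This is exactly the forward invariance of the zeros of $T_{k}$. With this verification in hand, Lemma \ref{L-4.1} delivers the contradiction and completes the induction.
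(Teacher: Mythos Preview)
Your proof is correct, but it takes a longer route than the paper's. The paper avoids both the induction and the ``delicate point'' about forward invariance of the partial sums $T_{k}$ by applying Lemma~\ref{L-4.1} not to $[T_{1},\ldots,T_{l}]$ but to a tuple of \emph{representatives}: choose $j_{k}\in S_{k}$ for each $k$, write $f_{i}=A_{i,j_{k}}f_{j_{k}}$ with $A_{i,j_{k}}\in\mathcal{P}_{c}^{\lambda}$ for $i\in S_{k}$, and observe that
\[
0=\sum_{i}f_{i}=\sum_{k=1}^{l}B_{k}f_{j_{k}},\qquad B_{k}:=\sum_{i\in S_{k}}A_{i,j_{k}}\in\mathcal{P}_{c}^{\lambda}.
\]
Since the $f_{j_{k}}$ are among the original coordinates, their zeros are forward invariant by hypothesis and their pairwise ratios lie outside $\mathcal{P}_{c}^{\lambda}$ by the definition of the partition; Lemma~\ref{L-4.1} then forces every $B_{k}$ to vanish, whence $T_{k}=B_{k}f_{j_{k}}\equiv 0$ for all $k$ in one stroke. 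Your argument reaches the same end, but the factorisation $T_{k}=\alpha_{k}f_{i}$ with $\alpha_{k}\in\mathcal{P}_{c}^{\lambda}$ that you use to establish forward invariance of the zeros of $T_{k}$ is precisely what lets the paper bypass $T_{k}$ altogether and work with $f_{j_{k}}$ directly; this also makes the induction unnecessary, since the case ``some $T_{k}\equiv 0$'' never needs separate treatment.
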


\begin{proof} Suppose that $i\in S_{k},$ $k\in\{0,\ldots, l\}.$ Then by the condition of the lemma, $f_{i}=A_{i, j_{k}}f_{j_{k}}$ for some $A_{i, j_{k}}\in\mathcal{P}_{c}^{\lambda}$ whenever the indexes $i$ and $j_{k}$ are in the same class $S_{k}.$ This implies that
$$0=\sum_{k=0}^{n}f_{k}=\sum_{k=1}^{l}\sum_{i\in S_{k}}A_{i,j_{k}}f_{j_{k}}=\sum_{k=1}^{l}B_{k}f_{j_{k}}$$ where $B_{k}=\sum_{i\in S_{k}}A_{i,j_{k}}\in\mathcal{P}_{c}^{\lambda}.$ This says that $f_{j_{1}}, \ldots, f_{j_{l}}$ are linearly degenerate over $\mathcal{P}_{c}^{\lambda}.$ Hence by Lemma \ref{L-4.1} all $B_{k}$ $(k=1,\ldots, l)$ are identically zero. Thus it follows
$$\sum_{i\in S_{k}}f_{i}=\sum_{i\in S_{k}}A_{i,j_{k}}f_{j_{k}}=B_{k}f_{j_{k}}\equiv 0$$
for all $k=\{1,\ldots ,l\}.$
\end{proof}

\begin{proof}[Proof of Theorem \ref{T-6.2}]
Denote $Q=\{1, \ldots, n+p\}.$ Let $H_{j}$ be defined by
$$H_{j}:\quad h_{j0}(z)w_{0}+\ldots+h_{jn}(z)w_{n}=0\quad (j\in Q)$$ where $[w_{0}, \ldots, w_{n}]$ is a homogeneous coordinate system of $\mathbb{P}^{n}(\mathbb{C}).$ Since $\{H_{j}\}_{j\in Q}$ are in $N$-subgeneral position of $\mathbb{P}^{n}(\mathbb{C}),$ any $N+2$ of $H_{j}$ satisfy a linear relation with nonzero coefficients in $\mathbb{C}^{1}.$ By conditions of the theorem, holomorphic functions
$$g_{j}:=(f, H_{j})=h_{j0}f_{0}+\ldots+h_{jn}f_{n}$$ satisfy
$$\{\tau(g_{j}^{-1}(\{0\}))\}\subset \{g_{j}^{-1}(\{0\})\}$$ for all $j\in Q,$ where $\{\cdot\}$ denotes a multiset with counting multiplicities of its  elements. We say that $i\sim j$ if $g_{i}=\alpha g_{j}$ for some $\alpha\in\mathcal{P}^{1}_{c}\setminus\{0\}.$  Hence
$$Q=\bigcup_{j=1}^{l}S_{j}$$ for some $l\in Q.$\par

Firstly, assume that the complement of $S_{k}$ has at least $N+1$ elements for some $k\in\{1, \ldots l\}.$ Choose an element $s_{0}\in S_{k},$ and denote $U=(Q\setminus S_{k})\cup\{s_{0}\}.$ Then $U$ contains at least $N+2$ elements, and thus there is a subset $U_{0}\subset U$ such that $U_{0}\cap S_{k}=\{s_{0}\}$ and $|U_{0}|=N+2.$ Therefore there exists $\alpha_{j}\in\mathbb{C}\setminus\{0\}$ such that
$$\sum_{j\in U_{0}}\alpha_{j}H_{j}=0.$$ Hence,
 $$\sum_{j\in U_{0}}\alpha_{j}g_{j}=\sum_{j\in U_{0}}\alpha_{j}(f, H_{j})=\sum_{j\in U_{0}}\alpha_{j}H_{j}(f) =0.$$

Without loss of generality, we may assume that $U_{0}=\{s_{1}, \ldots, s_{N+1}\}\cup\{s_{0}\}.$ It is easy to see from above discussion that all of zeros of $\alpha_{j}g_{j}$ $(j\in U_{0})$ are forward invariant with respect to the translation $\tau(z)=z+c,$ and $$G:=[\alpha_{s_{0}}g_{s_{0}}: \alpha_{s_{1}}g_{s_{1}}: \cdots: \alpha_{s_{N+1}}g_{s_{N+1}}]$$ is a meromorphic mapping from $\mathbb{C}^{m}$ into $\mathbb{P}^{N+1}(\mathbb{C})$ with its hyperorder $\zeta_{2}(G)\leq \zeta_{2}(f)<1.$ Furthermore, $\frac{\alpha_{i}g_{i}}{\alpha_{s_{0}}g_{s_{0}}}\not\in\mathcal{P}_{c}^{1}$ for any $i\in U_{0}\setminus\{s_{0}\},$ thus $i\not\sim s_{0}.$ Hence by Lemma \ref{L-4.2}  we have $\alpha_{s_{0}}g_{s_{0}}=0,$ and thus $(f, H_{s_{0}})\equiv 0.$ This means that the image $f(\mathbb{C}^{m})$ is included in the hyperplane $H_{s_{0}}$ of $\mathbb{P}^{n}(\mathbb{C}).$\par

Secondly, assume that the set $Q\setminus S_{k}$ has at most $N$ elements. Then $S_{k}$ has at least $n+p-N$ elements for all $k=1, \ldots, l.$ This implies that $$l\leq \frac{n+p}{n+p-N}.$$\par

Let $V$ be any subset of $Q$ with $|V|=N+1.$ Then $\{H_{j}\}_{j\in V}$ are linearly independent. Denote $V_{k}:=V\cap S_{k}.$ Then we have
$$V=\bigcup_{k=1}^{l}V_{k}.$$  Since each set $V_{k}$ gives raise to $|V_{k}-1|$ equations over the field $\mathcal{P}_{c}^{1},$ it follows that there are  at least
\begin{eqnarray*}\sum_{k=1}^{l}(|V_{k}|-1)&=&N+1-l\geq N+1-\frac{n+p}{n+p-N}\\&=&n-(n-N+\frac{N}{n+p-N})\end{eqnarray*}
 linear independent relations over the field $\mathcal{P}_{c}^{1}.$ This means that the image of $f$ is contained in a linear subspace over $\mathcal{P}_{c}^{1}$ of dimension $\leq [\frac{N}{n+p-N}-N+n].$ The proof of the theorem is complete.\end{proof}

\begin{proof}[Proof of Theorem \ref{T-5.3}] By Theorem \ref{T-6.2},  the image of $f$ is contained in a projective linear subspace over $\mathcal{P}_{c}^{1}$ of dimension $\leq [\frac{N}{n+p-N}-N+n].$
By the assumption $p>\frac{N}{N-n+1}+N-n$ it follows  $[\frac{N}{n+p-N}-N+n]=0.$ Hence $f(z)=f(z+c).$ The proof of Theorem \ref{T-5.3} is thus complete.\end{proof}


\def\cprime{$'$}


\end{document}